\documentclass[preprint]{imsart}
\usepackage{amsthm,amsmath,amssymb,natbib}
%\RequirePackage[colorlinks,citecolor=blue,urlcolor=blue]{hyperref}

% provide arXiv number if available:
%\arxiv{arXiv:0000.0000}

% put your definitions there:
\startlocaldefs

\numberwithin{equation}{section}
\theoremstyle{plain}
\newtheorem{theorem}{Theorem}[section]
\newtheorem{lemma}[theorem]{Lemma}
\newtheorem{proposition}[theorem]{Proposition}
\newtheorem{corollary}[theorem]{Corollary}
\newtheorem{assumption}[theorem]{Assumption}
\newtheorem{definition}[theorem]{Definition}
\newtheorem{remark}[theorem]{Remark}
\newtheorem{example}[theorem]{Example}

\DeclareMathOperator*{\esssup}{esssup}

\newcommand{\dd}{{\rm d}}
\newcommand{\DD}{\,{\rm d}}
\newcommand{\EE}{\mathbb{E}}

\newcommand{\HH}{\mathbb{H}}
\newcommand{\NN}{\mathbb{N}}
\newcommand{\PP}{\mathbb{P}}
\newcommand{\QQ}{\mathbb{Q}}
\newcommand{\RR}{\mathbb{R}}

\newcommand{\1}{\mathbf{1}}

\newcommand{\B}{\mathcal{B}}

\renewcommand{\H}{\mathcal{H}}
\newcommand{\Qorig}{Q}
\newcommand{\Qproxi}{{Q}^*}
\newcommand{\Qspec}[1]{{Q}^{(#1)}}
\newcommand{\tildeQproxi}{{\tilde Q}^*}
\newcommand{\tildeQspec}[1]{\tilde Q^{(#1)}}
\endlocaldefs

\begin{document}

\begin{frontmatter}

% "Title of the paper"
\title{On the Normalized Spectral Representation of Max-Stable Processes on a Compact Set}
\runtitle{Normalized Spectral Representation}

% indicate corresponding author with \corref{}
% \author{\fnms{John} \snm{Smith}\corref{}\ead[label=e1]{smith@foo.com}\thanksref{t1}}
% \thankstext{t1}{Thanks to somebody} 
% \address{line 1\\ line 2\\ printead{e1}}
% \affiliation{Some University}

\begin{aug}
\author{\fnms{Marco} 
  \snm{Oesting}\thanksref{t1}\ead[label=e1]{oesting@uni-mannheim.de}},
\author{\fnms{Martin}
  \snm{Schlather}\thanksref{t1}\ead[label=e2]{schlather@uni-mannheim.de}}
\and
\author{\fnms{Chen}
  \snm{Zhou}\thanksref{t2}\ead[label=e3]{zhou@ese.eur.nl}}
\runauthor{M.~Oesting, M.~Schlather and C.~Zhou}
\affiliation{Universit\"at Mannheim, Universit\"at Mannheim,
             Erasmus University Rotterdam and De Nederlandsche Bank}
\address{M.~Oesting\\ M.~Schlather\\
        Institut f\"ur Mathematik \\Universit\"at Mannheim\\
        A 5, 6\\ 68161 Mannheim\\ Germany\\
        \printead{e1}\\
        \hphantom{E-mail: }\printead*{e2}}
\address{C.~Zhou\\
            Erasmus School of Economics\\ Erasmus University Rotterdam\\
	    P.O. Box 1738\\ 3000DR Rotterdam\\ The Netherlands\\
	    and\\
	    Economics and Research Division\\ De Nederlandsche Bank\\
            P.O. Box 98\\ 1000AB Amsterdam\\ The Netherlands\\
      \printead{e3}}
\end{aug}

\thankstext{t1}{Supported by Volkswagen Stiftung within the project `Mesoscale Weather Extremes -- Theory,
Spatial Modeling and Prediction (WEX-MOP)'.}

\thankstext{t2}{Views expressed do not necessarily reflect official positions of De Nederlandsche Bank.}

\begin{abstract}
The normalized spectral representation of a max-stable process on a compact set
is the unique representation where all spectral functions share the same
supremum. Among the class of equivalent spectral representations of a process,
the normalized spectral representation plays a distinctive role as a solution
of two optimization problems in the context of an efficient simulation of
max-stable processes. Our approach has the potential of considerably reducing
the simulation time of max-stable processes. 
\end{abstract}

\begin{keyword}[class=MSC]
%\kwd[Primary ]{}
\kwd[Primary ]{60G70}   % Extreme value theory; extremal processes
\kwd[; secondary ]{68U20} % Simulation
%\kwd[; secondary ]{}
\end{keyword}

\begin{keyword}
\kwd{Brown-Resnick process}
\kwd{mixed moving maxima}
\kwd{optimal simulation}
\end{keyword}

\end{frontmatter}

\section{Introduction}

Max-stable processes have become a popular tool for modeling spatial extremes,
particularly in environmental sciences, see, e.g. \citet{coles93}, 
\citet{colestawn96} and \citet{padoan2010likelihood}.
 Let $Z=\{Z(y) : y \in K\}$ be a max-stable process with standard
 Fr\'{e}chet margins defined on an index set $K$. Then, there exists a spectral
 measure $H$ defined on an appropriate set of functions $\HH$ such that
\begin{equation} \label{eq:def}
  Z(y) = \max_{(t,f) \in \Pi} t f(y), \qquad y\in K,
\end{equation}
where $\Pi$ is the Poisson point process on $(0,\infty) \times \HH$
with intensity measure $t^{-2} \DD t \, H(\dd f)$ and
\begin{equation} \label{eq:std-frechet}
 \int_{\HH} f(y) \, H(\dd f) =1
\end{equation}
for all $y \in K$, see \cite{haan84,GHV90,kabluchko09d} and \cite{wangstoev10},
for instance. The non-negative shape functions $f$ in $\HH$ are the 
{\it spectral functions} that correspond to the max-stable process $Z$. 

The ensemble of spectral functions corresponding to a given max-stable process
is not unique \citep[cf.][Remark 9.6.2]{DHF} and a choice has to be made in
applications. Some specific choices may bear severe disadvantages.
For instance, finite approximations based on the original definition of the
Brown-Resnick process are far from the actual process, in general 
\citep{KSH09}. Nonetheless, the optimality of the choice of spectral functions
has not been discussed in literature yet. Here, we propose a criterion for
choosing spectral functions that is the solution to an optimization problem
stemming from unconditional simulation of max-stable processes.

From both a theoretical and a practical point of view, it is important 
to be able to draw random samples from a max-stable process. 
While bivariate marginal distributions can be calculated frequently, higher
dimensional marginal distributions do not have, in nearly all the cases,
explicit formulae. Consequently, they can be addressed only by 
simulation.  Furthermore, most applications require the estimation of
characteristics of max-stable processes that cannot be explicitly
calculated. That leaves simulation as the only option, see, e.g.\ 
\citet{buishand2008spatial} and \citet{blanchet2011spatial}.
Finally, unconditional simulation appears as part of the conditional 
simulation of max-stable processes \citep{DEMR13,oestingschlather12}. 

According to the spectral representation \eqref{eq:def}, the construction of a
max-stable process involves infinitely many points $(t,f)\in \Pi$. 
Nevertheless, since only the maximum over all functions $tf$ counts, the number
of points $(t,f)$ that contribute to $Z$, i.e. $Z(y) = t f(y)$ for at least one
point $y\in K$, is finite under mild conditions, see \cite{DHF}, Cor.\ 9.4.4.
However, their statement is a theoretical one that does not help for simulation
purposes because one cannot determine ex ante which function $f$ will 
contribute. Assuming that $H$ is finite, \cite{schlather02} suggests to start
with those points $(t,f)$ that will contribute most likely to $Z$, i.e., with
those that have the highest values of $t$. By ranking the points $t$ in a
descending order $t_1> t_2,\ldots$ and assuming without loss of generality that
$H$ is a probability measure, we have that $t_i=^d1/(\sum_{j=1}^i E_j)$,
where $E_j$ are independent and identically distributed random variables with
standard exponential distribution. Let $f_i\sim_{i.i.d.} H$ be independent of
the $E_j$ and
\begin{equation} \label{eq:m}
  Z^{(m)}(y) = \max_{1\leq i\leq m} \frac{1}{\sum_{j=1}^i E_j} f_i(y),
\qquad y\in K, 
\end{equation}
a finite approximation for $Z$. Then, $Z =^d Z^{(\infty)}$, i.e.
\begin{equation*}
Z(y) =^d \max_{i\geq 1} \frac{1}{\sum_{j=1}^i E_j} f_i(y), \qquad y\in K.
\end{equation*}
Therefore, if for a given $m$ we have that 
\begin{equation} \label{eq:stopping}
Z^{(m)}(y)\geq \frac{1}{\sum_{j=1}^m E_j}\sup_{f\in \HH} f(y) \quad
\text{ for all } y \in K,
\end{equation}
then, obviously, $Z^{(n)}(y) = Z^{(\infty)}(y)$ for all $y\in K$ and all 
$n\ge m$. In other words, any spectral function $f_i$ with $i> m$ cannot
contribute to $Z$. This results in a {\it stopping rule} for a ``$m$-step
representation'' of $Z$, where $m$ is a random integer. Such a stopping rule
can be applied to construct an exact simulation algorithm. In the case of 
Brown-Resnick processes, \citet{OKS12} compare this algorithm to algorithms
based on other representations. The results of \cite{schlather02} imply that
$m$ is finite almost surely if, for instance, $K$ is finite, the shape
functions are uniformly bounded and their support is included in a fixed
compact set. As a side result, we shall show that even the expectation of $m$
is finite, under rather mild conditions.

We present a toy example to clarify why the choice of spectral functions
can have a major impact on the distribution of the stochastic number $m$.
Consider the simplest case where $Z$ is univariate. Specializing 
\eqref{eq:def} to $K=\{y_0\}$ and $f\equiv 1$, the random variable $Z(y_0)$
follows a  univariate Fr\'echet distribution. It has a
representation given by
\begin{equation} \label{eq:def2}
  Z(y_0) =^d \max_{t \in \Pi} t 
\end{equation}
where $\Pi$ is the Poisson point process on $(0, \infty)$ with intensity
$t^{-2} \DD t$. Obviously, the right-hand side of \eqref{eq:def2}
is fully given by the largest value of $t\in\Pi$. In other words, $m\equiv 1$.
Now, let us consider the general case: $Z(y_0)$ is given by \eqref{eq:def}
and $f(y_0)$ is a non-degenerate random variable with expectation 
$\EE f(y_0)=1$. Then, the stochastic number $m$ is greater than $1$
 with positive probability. Even worse, if the right endpoint of
 $f(y_0)$ is infinite then $m=\infty$ almost surely.
In practical applications, in particular for simulating $Z(y_0)$, the
spectral representation in \eqref{eq:def2} would be considered as optimal.
This example illustrates the optimality we intend to achieve by the
choice of spectral functions for an arbitrary max-stable process.

The very general optimality problem for general index sets $K$ and arbitrary
random functions $f$ seems to be rather complicated. Therefore, we shall
suggest a modified optimization problem and shall demonstrate that its solution
is explicit and unique for each given max-stable process and index set $K$. It
can be achieved via resealing any ensemble of spectral functions to a new 
ensemble of spectral functions satisfying $\sup_{y\in K} f(y)=c$, for all
$f \in \HH$.  We call such a representation with all spectral functions
sharing the same supremum the \emph{normalized spectral representation}.
This representation was initially used in constructing the spectral
representation for sample-continuous max-stable processes on $K=[0,1]$, see
e.g.\ \cite{dehaanlin01} and \cite{DHF}, Cor.\ 9.4.5. Hence, in this paper, we
give a theoretical  justification on the optimality of the normalized spectral
representation.

This paper is organized as follows. In Section \ref{sec:background}, we revisit
de Haan's \citeyearpar{haan84} spectral representation of max-stable processes
and give a formula how to transform one ensemble of spectral functions under a
given spectral measure to another ensemble under a different spectral measure.
We focus on a particular transformation leading to the normalized spectral
representation. We state necessary and sufficient conditions on the existence
and show the uniqueness of this representation. In Section \ref{sec:optim} we
define the optimization problem and give the explicit solution of the
replacement problem, the normalized spectral representation. The replacement
problem is evaluated and refined in Section \ref{sec:refine}. Section
\ref{sec:examples} deals with examples of the normalized spectral 
representation for specific cases of the max-stable process as well as the
index set $K$. In Section \ref{sec:simu}, for Smith's \citeyearpar{smith90}
process, the number $m$ of considered spectral functions in the normalized
spectral representation is compared to the corresponding number in the
algorithm proposed by \citet{schlather02} in a simulation study. The paper
closes with a summary and discussion of our results.

\section{The normalized spectral representation} \label{sec:background}

Throughout the paper we assume that the index set $K$ is a compact
Polish space. The following proposition shows how to transform one spectral
representation to another one.
\begin{proposition}\label{prop:1}
 Let $Z$ be a max-stable process with standard Fr\'echet margins defined
 as in \eqref{eq:def} and \eqref{eq:std-frechet} where the spectral functions
 $f$ are in some Polish space $\HH \subset [0,\infty)^K$.
  
 Suppose $H$ is a locally finite measure on $\HH$. 
 Let $g$ be some probability density on $\HH$ w.r.t.\ $H$,
 i.e.\ $g \geq 0$ and $\int_{\HH} g(f) H(\dd f) = 1$, such that
 \begin{equation} \label{eq:g-regularity}
   H\left( \left\{f: g(f) =0, \ \sup_{y \in K} f(y) > 0\right\}\right) = 0.
 \end{equation}
 Then,
 \begin{eqnarray} \label{eq:0}
  Z(y) =^d \max_{(t,f) \in \tilde \Pi} t \frac{f(y)}{g(f)}, \qquad y\in K,  
 \end{eqnarray}
 where $\tilde \Pi$ is a Poisson point process with intensity
 $t^{-2} \DD t \, g(f) H(\dd f)$.
\end{proposition}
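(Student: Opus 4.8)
The plan is to show that the right-hand side of (2.3) is a max-stable process with the same spectral representation as $Z$, hence equal in distribution. The core idea is that a Poisson point process whose points are transformed by a deterministic map has a push-forward intensity measure that can be computed explicitly via the mapping theorem, and we want to check that this push-forward recovers exactly the intensity $t^{-2}\,\dd t\,H(\dd f)$ of the original representation (2.1).

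Let me sketch the steps. First I would define the transformation map $T:(0,\infty)\times\HH \to (0,\infty)\times\HH$ that sends a point $(t,f)$ to $(t\,g(f), f)$, i.e.\ it rescales the first coordinate by $g(f)$ while leaving $f$ fixed; note $tf/g(f) = (t g(f))\cdot f/g(f)^2$, so I must be careful and instead work directly with the functional value. Concretely I would write $\tilde\Pi = \{(\tilde t_k,\tilde f_k)\}$ and note that $t\,f(y)/g(f)$ evaluated on $\tilde\Pi$ equals $\max$ over $k$ of $\tilde t_k f_k(y)/g(f_k)$. The natural change of variables is $s = t/g(f)$ (for $f$ with $g(f)>0$), under which the point $(\tilde t,\tilde f)$ maps to $(s,\tilde f)=(\tilde t/g(\tilde f),\tilde f)$ and the summand $\tilde t f(y)/g(f)$ becomes $s\,f(y)$, so the transformed process reads $\max_{(s,f)\in\Pi'} s f(y)$ with $\Pi'$ the image point process. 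By the Poisson mapping theorem, $\Pi'$ is again Poisson, and its intensity is the push-forward of $t^{-2}\,\dd t\,g(f)H(\dd f)$ under $(t,f)\mapsto(t/g(f),f)$. For fixed $f$ with $g(f)>0$ the map $t\mapsto s=t/g(f)$ has $\dd t = g(f)\,\dd s$ and $t^{-2} = g(f)^{-2}s^{-2}$, so $t^{-2}\,\dd t\,g(f) = s^{-2}\,\dd s$; the $g(f)$ factors cancel exactly, leaving intensity $s^{-2}\,\dd s\,H(\dd f)$ on the set $\{g(f)>0\}$. Thus $\Pi'$ has intensity $s^{-2}\,\dd s\,H(\dd f)$ restricted to $\{f:g(f)>0\}$, and $\max_{(s,f)\in\Pi'}sf(y)$ is precisely $Z(y)$ as in (2.1), except that the point set $\HH$ has been restricted to $\{g>0\}$.

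This is where the regularity condition (2.2) enters and is the one step that needs genuine care rather than routine computation. The restriction to $\{g(f)>0\}$ discards exactly those $f$ with $g(f)=0$; I must argue that this omission changes nothing almost surely. By (2.2), the discarded set $\{g(f)=0,\ \sup_{y\in K}f(y)>0\}$ is $H$-null, so any $f$ with $g(f)=0$ that could ever contribute a positive value to the maximum is excluded on an $H$-null set; the only surviving $f$ with $g(f)=0$ satisfy $\sup_{y}f(y)=0$, i.e.\ $f\equiv 0$, and these contribute nothing to $\max_{(s,f)} sf(y)$. Hence restricting the intensity to $\{g>0\}$ leaves the law of the pointwise maximum over $y\in K$ unchanged, and the resulting process is identical in distribution to $Z$. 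The main obstacle, then, is the bookkeeping around the null set and verifying that the Poisson mapping theorem applies on the Polish space $\HH$ (measurability of $T$ and $\sigma$-finiteness of the intensity, which follows from local finiteness of $H$ together with $g$ being a probability density); once these are in place the $g(f)$-cancellation in the Jacobian is immediate and the identity in distribution follows.
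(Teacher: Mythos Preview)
Your argument is correct. The paper takes a more direct route: rather than invoking the Poisson mapping theorem, it simply computes the finite-dimensional distributions of both sides of \eqref{eq:0} by evaluating the void probability
\[
\PP(Z(y_i)\le z_i,\ i\in I)=\exp\Bigl(-\int_\HH\max_{i\in I}\frac{f(y_i)}{z_i}\,H(\dd f)\Bigr)
\]
and observing that the factor $g(f)$ cancels in the corresponding expression for the right-hand side. This is exactly the change of variables $s=t/g(f)$ you perform, just written at the level of intensity integrals rather than as a transformation of point configurations. Your approach has the virtue of making the role of condition \eqref{eq:g-regularity} fully explicit --- the paper's calculation quietly relies on it for the set $\{g(f)=0\}$ to be harmless in the integral identity, but does not spell this out. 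On the other hand, the paper's computation sidesteps any discussion of measurability of the transformation map or applicability of the mapping theorem on the Polish space $\HH$, which you correctly flag as points needing verification. Either route is short and valid; they are two phrasings of the same cancellation.
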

\begin{proof}
For any finite subset $\{y_i: i \in I\} \subset K$ and $z_i > 0$, $i \in I$,
we have
\begin{align*}
   \PP(Z(y_i) \le z_i, \ i \in I) 
  ={}& \PP\left(\left|\Pi \cap \{(t,f) : \ tf(y_i) > z_i \hbox{ for some } i\in I\}\right|=0\right)\\
  ={}& \exp\left(
    -\int_{\HH} \int_{\min_{i \in I} \{ z_i / f(y_i)\}}^\infty
    t^{-2} \DD t \, H(\dd f)
    \right) \displaybreak[0]\\
 ={} & \exp\left(
    -\int_{\HH} \int_{\min_{i \in I} \{ z_i / [f(y_i) / g(f)]\}}^\infty
    t^{-2} \DD t \, g(f) \, H(\dd f)
    \right)\\
 ={} & \PP\left( \max_{(t,f) \in \tilde \Pi} t \frac{f(y_i)}{g(f)} \leq z_i,
                 \ i \in I \right).
\end{align*}
\end{proof}

Applying Proposition \ref{prop:1}, one can transform the given set of spectral
functions $\{f\}_{(t,f) \in \Pi}$ to a new set 
$\{f/g(f)\}_{(t,f) \in \tilde \Pi}$, where $f$ follows the transformed
probability measure $gH$ defined by $$\textstyle gH(A) = \int_A g(f) H(\dd f)$$
for all measurable sets $A \subset \HH$. We will focus on a particular choice
of $g$ which leads to the normalized spectral representation as follows. Let
$f \mapsto \sup_{y \in K} f(y)$ be measurable on $\HH$ and assume that
$$ \textstyle c: = \int_{\HH} \sup_{y \in K} f(y) \, H(\dd f) < \infty.$$
Then, the choice $g = g^*$ defined by
\begin{equation} \label{eq:g-star}
 g^*(f) := \textstyle c^{-1} \sup_{y \in K} f(y), \quad f \in \HH,
\end{equation}
satisfies the assumptions of Proposition \ref{prop:1}. Therefore, $Z =_d \tilde Z$ for
\begin{equation} \label{eq:z-tilde}
\tilde Z(y) = \max_{t \in \Pi_0} t \frac{c F_t(y)}{\sup_{\tilde y \in K} F_t(\tilde y)}, \quad y \in K,
\end{equation}
where $\Pi_0$ is a Poisson point process on $(0,\infty)$ with intensity 
$t^{-2} \dd t$ and $F_t$, $t>0$, are independent random processes with density
$c^{-1} \sup_{y \in K} f(y) \, H(\dd f)$. The modified spectral functions
$\{c F_t/\sup_{y \in K} F_t(y)\}$ can be perceived as independent copies
of a stochastic process $F^*$ with 
\begin{equation} \label{eq:sup-const}
\textstyle \sup_{y \in K} F^*(y) \equiv c.
\end{equation}

\begin{definition} \label{def:spec-repr}
Let $Z$ be a max-stable process on $K$ satisfying
\begin{equation} \label{eq:spec-repr}
 Z =^d \max_{t \in \Pi_0} t F^*_t. 
\end{equation}
Here,  $\Pi_0$ is a Poisson point process on $(0,\infty)$ with intensity 
$t^{-2} \dd t$ and $F^*_t$, $t>0$, are independent copies of a stochastic
process $F^*$ satisfying \eqref{eq:sup-const} for some $c \in (0,\infty)$.
Then, the right-hand side of \eqref{eq:spec-repr} is called normalized
spectral representation of $Z$.
\end{definition}

The choice $g = g^*$ in Proposition \ref{prop:1} leads to a valid normalized
spectral representation only if
$c = \int_{\HH} \sup_{y \in K} f(y) \, H(\dd f) < \infty$.
Note that, in general, $c$ is not necessarily finite even though we assume
$\int_\HH f(y) H(\dd f) = 1$ for all $y \in K$. However, $c$ is finite whenever
$K$ consists of a finite number of points. The following proposition deals with
equivalent conditions for $c < \infty$ in a more general setting, replacing 
$f \mapsto \sup_{y \in K} f(y)$ by an arbitrary max-linear functional.

\begin{proposition} \label{prop:c-finite}
 Assume that we are in the framework of Proposition \ref{prop:1}. 
 Furthermore, assume that the function $L: \HH \to (0,\infty)$
 is max-linear and measurable.
 Then the following conditions are equivalent:
 \begin{enumerate}
  \item[1. ] $c_L := \int_\HH L(f) \, H(\dd f) < \infty$
  \medskip
  
  \item[2. ] $\PP(L(Z) \leq a) > 0$ for some $a > 0$  
  \medskip
  
  \item[3. ] $\PP(L(Z) < \infty) = 1$ 
  (or, equivalently, $\PP(L(Z) < \infty) > 0$).  
 \end{enumerate}
 If we additionally assume that there is some stochastic process $W$ such that
 \begin{equation} \label{eq:incremental}
  Z =^d \max_{t\in\Pi_0} t W_t,
 \end{equation}
 where $\Pi_0$ is a Poisson point process on $(0,\infty)$ with intensity 
 $t^{-2} \DD t$ and $W_t$, $t>0$, are independent copies of $W$,
 we get another equivalent condition:
 \begin{enumerate}
  \item[4. ] $\EE L(W) < \infty.$
 \end{enumerate}
\end{proposition}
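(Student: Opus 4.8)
The plan is to exploit max-linearity to collapse everything to a one-dimensional Fréchet computation. Since $L$ is max-linear, applying it to the spectral representation \eqref{eq:def} should yield
\begin{equation*}
  L(Z) = \max_{(t,f) \in \Pi} t\, L(f),
\end{equation*}
so that $L(Z)$ is itself the maximum over a point process on $(0,\infty)$. By the mapping theorem for Poisson point processes, the image of $\Pi$ under $(t,f)\mapsto tL(f)$ is a Poisson point process on $(0,\infty)$ whose intensity I would read off directly: for any $a>0$, the number of points of $\Pi$ with $tL(f)>a$ is Poisson with mean
\begin{equation*}
  \int_\HH \int_{a/L(f)}^\infty t^{-2}\DD t\, H(\dd f)
   = \frac{1}{a}\int_\HH L(f)\, H(\dd f) = \frac{c_L}{a}.
\end{equation*}

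This produces the dichotomy from which all the equivalences fall out. If $c_L<\infty$, then $\PP(L(Z)\le a)=\exp(-c_L/a)$ for every $a>0$, so $L(Z)$ has a Fréchet distribution with scale $c_L$; in particular $L(Z)<\infty$ almost surely and $\PP(L(Z)\le a)>0$ for each $a$. If $c_L=\infty$, the mean $c_L/a$ is infinite, so almost surely there are infinitely many points with $tL(f)>a$ for every $a$, forcing $L(Z)=\infty$ almost surely; hence $\PP(L(Z)\le a)=0$ for all $a$ and $\PP(L(Z)<\infty)=0$. Reading off the three statements, condition 1 is exactly the finite branch, and conditions 2 and 3 each single out the finite branch as well (the parenthetical equivalence in 3 being immediate since the dichotomy shows $\PP(L(Z)<\infty)\in\{0,1\}$). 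This gives $1\Leftrightarrow 2\Leftrightarrow 3$.

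For the extra condition 4, I would rerun the argument on the representation \eqref{eq:incremental}. Applying $L$ gives $L(Z)=^d\max_{t\in\Pi_0} t\,L(W_t)$, where the $L(W_t)$ are i.i.d.\ marks attached to the points of $\Pi_0$. By the marking theorem, $\{(t,L(W_t))\}$ is Poisson with intensity $t^{-2}\DD t$ times the law of $L(W)$, and pushing forward under $(t,s)\mapsto ts$ yields, for the event $ts>a$, the mean $a^{-1}\EE L(W)$. The same dichotomy then shows that $L(Z)$ is finite a.s.\ precisely when $\EE L(W)<\infty$; comparing the two scale parameters also recovers $\EE L(W)=c_L$. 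This places 4 on equal footing with 1--3.

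The main obstacle I anticipate is justifying the very first step, that $L$ commutes with the countable supremum defining $Z$, i.e.\ $L(\max_{(t,f)\in\Pi} tf)=\max_{(t,f)\in\Pi} tL(f)$. Max-linearity delivers this for finite maxima, so the task is to upgrade it to the infinite maximum over the point process. For the canonical functional $f\mapsto\sup_{y\in K} f(y)$ this is immediate, as the supremum over $K$ interchanges with the supremum over $\Pi$; for a general max-linear $L$ I would need a monotone-continuity property (if $h_n\uparrow h$ then $L(h_n)\uparrow L(h)$), which I would either build into the working notion of max-linearity or obtain by approximating $Z$ through its finite truncations $Z^{(m)}$ from \eqref{eq:m} and passing to the limit. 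Secondary care is needed to ensure $L(Z)$ and $L(W)$ are well defined and measurable, but this is routine once the commutation is in hand.
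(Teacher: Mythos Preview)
Your approach is essentially the paper's: it too computes $\PP(L(Z)\le a)=\exp(-c_L/a)$ via the void probability of $\Pi$ on $\{(t,f):tL(f)>a\}$ (and the analogous identity $\PP(L(Z)\le a)=\exp(-a^{-1}\EE L(W))$ from \eqref{eq:incremental}), then reads off all equivalences from this single formula. The commutation $L(\max_{(t,f)\in\Pi} tf)=\max_{(t,f)\in\Pi} tL(f)$ that you flag as the main obstacle is used in the paper without comment.
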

\begin{proof}
 The assertion follows from the following continued equality:
 \begin{align*}
 \exp\left(- \frac {c_L} a \right) ={} & \exp\left(-\int_\HH \int^\infty_{a/L(f)} t^{-2} \DD t \,H(\dd f) \right)
            {}={}  \PP\left(L(Z) \leq a\right)\\
            ={} & \exp\left(-\EE_W\left( \int^\infty_{a/L(W)} u^{-2} \DD u \right) \right) 
          {}={} \exp\left( - a^{-1} \EE L(W) \right).
 \end{align*}
 for any $a >0$. The equivalence to the third assertion follows from the relation
 $ \PP(L(Z) < \infty) = \lim_{a \to \infty} \PP(L(Z) \leq a)$.
\end{proof}

\begin{remark}
 Similar results, presenting equivalent statements for some special choices of 
 $L$, can already be found in the literature; see, for instance, 
 \cite{resnickroy91}, who showed the equivalence of the first and third 
 assertion for $L(f) = \sup_{y \in K} f(y)$. For this choice of $L$, it 
 follows that $c$ is finite if $Z$ has continuous sample paths.
\end{remark}

While Proposition \ref{prop:c-finite} is related to the question of the
existence of a normalized spectral representation, the following proposition
deals with its uniqueness.

\begin{proposition} \label{prop:uniqueness}
 Let $Z$ be a max-stable process with a normalized spectral representation.
 Furthermore, let $Z^K := \sup_{y \in K} Z(y)$.
 
 Then, we have
 \begin{enumerate}
  \item[1.] $c = - \log \PP\left(Z^K \leq 1\right)$
  \item[2.] For any $y_1, \ldots, y_n \in K$, $w_1, \ldots, w_n > 0$, it holds
  \begin{align}
   & \PP(F^*(t_i) \leq w_i, \, 1 \leq i \leq n) \nonumber\\
   ={} & \lim_{z \to \infty} \PP\left(\frac{Z(y_i)}{Z^K} \leq \frac{w_i} c, \, 1 \leq i \leq n \, \bigg| \, Z^K > z\right). \label{eq:w-aslim}
  \end{align}
 \end{enumerate}
\end{proposition}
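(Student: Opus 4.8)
The plan is to reduce both assertions to the Poisson structure of the normalized representation \eqref{eq:spec-repr}, the decisive tool being the constant-supremum property \eqref{eq:sup-const}, which lets me commute the supremum over $K$ with the maximum over the point process. Since $\sup_{y\in K}F^*_t(y)=c$ for every $t$,
\[
Z^K=\sup_{y\in K}\max_{t\in\Pi_0}tF^*_t(y)=\max_{t\in\Pi_0}t\,\sup_{y\in K}F^*_t(y)=c\max_{t\in\Pi_0}t.
\]
The upper bound is immediate from $F^*_t(y)\le c$; the matching lower bound follows by taking, for the record point $t_1:=\max_{t\in\Pi_0}t$ (which exists almost surely, $\Pi_0$ having only finitely many points above any positive level), a sequence $y_m$ with $F^*_{t_1}(y_m)\to c$. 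Writing $M:=\max_{t\in\Pi_0}t$, the number of points of $\Pi_0$ above $x$ is Poisson with mean $\int_x^\infty t^{-2}\DD t=1/x$, so $M$ is standard Fr\'echet, $\PP(M\le x)=\exp(-1/x)$. Hence $\PP(Z^K\le1)=\PP(M\le c^{-1})=\exp(-c)$, which is assertion~1; note $Z^K=cM$ is a measurable functional of $\Pi_0$, so no separate measurability argument is needed.

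For assertion~2 I would order the points as $t_1>t_2>\cdots$ with i.i.d.\ marks $F^*_1,F^*_2,\dots$ distributed as $F^*$ and independent of the $t_k$. Because $Z^K=ct_1$, the conditioning event $\{Z^K>z\}$ is $\{t_1>z/c\}$, and the event $\{Z(y_i)/Z^K\le w_i/c,\ 1\le i\le n\}$ is equivalent to $\{t_kF^*_k(y_i)\le w_it_1\ \text{for all }i,k\}$. Splitting off the index $k=1$ turns the record constraint into $\{F^*_1(y_i)\le w_i,\ 1\le i\le n\}$, an event of probability $p:=\PP(F^*(y_i)\le w_i,\ 1\le i\le n)$ depending only on the record mark, while the constraints for $k\ge2$ read $F^*_k(y_i)\le w_it_1/t_k$.

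I would then condition on the record location $t_1=s$ (density $s^{-2}\exp(-1/s)$ on $(0,\infty)$): given this, the record mark is an independent copy of $F^*$ and the remaining points form a Poisson process on $(0,s)$ with intensity $t^{-2}\DD t$ carrying independent $F^*$-marks. The record mark contributes the factor $p$, and the probability that none of the remaining points violates its constraint is $\exp(-\Lambda(s))$ with, after the substitution $u=s/t$,
\[
\Lambda(s)=\int_0^s t^{-2}\,\PP\!\left(\max_{i}\frac{F^*(y_i)}{w_i}>\frac{s}{t}\right)\DD t=\frac1s\int_1^\infty\PP(V>u)\DD u,\qquad V:=\max_{i}\frac{F^*(y_i)}{w_i}.
\]
This yields
\[
\PP\!\left(\tfrac{Z(y_i)}{Z^K}\le\tfrac{w_i}{c},\ 1\le i\le n,\ Z^K>z\right)=p\int_{z/c}^\infty s^{-2}\exp(-1/s)\exp(-\Lambda(s))\DD s,
\]
whereas $\PP(Z^K>z)=\int_{z/c}^\infty s^{-2}\exp(-1/s)\DD s$. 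Dividing, and observing that on the range $s\ge z/c$ one has $\Lambda(s)=C_V/s\le cC_V/z$ with $C_V:=\int_1^\infty\PP(V>u)\DD u$, the factor $\exp(-\Lambda(s))$ lies in $[\exp(-cC_V/z),1]$, so the ratio of the two integrals is squeezed to $1$ as $z\to\infty$, leaving the limit $p$, which is precisely \eqref{eq:w-aslim}.

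The main obstacle is the finiteness $C_V<\infty$ required for $\Lambda(s)\to0$, and this is exactly where the normalization \eqref{eq:sup-const} is indispensable: since $F^*(y_i)\le\sup_{y\in K}F^*(y)=c$, the variable $V=\max_i F^*(y_i)/w_i$ is bounded by $c/\min_i w_i$, so $\PP(V>u)=0$ for large $u$ and the integral is trivially finite. Conceptually, this is the reason the record point asymptotically dominates under $\{Z^K>z\}$ and the contribution of all subordinate points vanishes; without the constant-supremum property this domination, and hence the clean limit, would fail. A minor point to dispatch is the almost-sure existence and uniqueness of the record $t_1$, both standard for a Poisson process with intensity $t^{-2}\DD t$ on a compact Polish index set.
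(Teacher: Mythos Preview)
Your proof is correct. The organization differs from the paper's, however. For part~2 the paper does not condition on the record location; instead it sandwiches the joint probability $\PP\bigl(Z(y_i)/Z^K\le w_i/c,\ 1\le i\le n,\ Z^K>z\bigr)$ between two explicit Poisson probabilities, obtained by splitting the marked point process $\tilde\Pi=\{(t,F^*_t):t\in\Pi_0\}$ over the regions $\{u>z/c\}$ and $\{u\le z/c\}$. The three pieces of the lower bound evaluate to factors $1-\exp\bigl(-\tfrac{c}{z}p\bigr)$, $\exp\bigl(-\tfrac{c}{z}\PP(V>1)\bigr)$ and $\exp\bigl(-\tfrac{c}{z}\EE(V-1)_+\bigr)$, and the upper bound is $1-\exp\bigl(-\tfrac{c}{z}p\bigr)$; dividing by $\PP(Z^K>z)=1-e^{-c/z}$ and letting $z\to\infty$ gives $p$ from both sides. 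Your route---condition on $t_1=s$, use that the residual process on $(0,s)$ is again Poisson with the original intensity, and obtain the \emph{exact} formula $p\int_{z/c}^{\infty}s^{-2}\exp\bigl(-(1+C_V)/s\bigr)\dd s$ with $C_V=\EE(V-1)_+$---is arguably cleaner, since it produces an identity rather than bounds, at the modest cost of invoking the Palm-type decomposition of a Poisson process at its maximum. Both arguments rest on the same key input you identified: the bound $V\le c/\min_i w_i$ coming from \eqref{eq:sup-const}, which is what forces $C_V<\infty$ (equivalently, what makes the paper's third exponential factor tend to~$1$). For part~1 your computation $Z^K=c\max_{t\in\Pi_0}t$ is exactly what the paper's reference to Proposition~\ref{prop:c-finite} amounts to.
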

\begin{proof}
 The first part is a consequence of the proof of Proposition 
 \ref{prop:c-finite}. For the proof of the second part, let
 $\tilde \Pi = \{(t,F^*_t): \ t \in \Pi_0\}$. Then, we have
 \begin{align}
  & \PP\bigg(\left|\tilde \Pi \cap \left\{(u,w): \, u > \frac z c\right\}\right| > 0, \nonumber \\
  & \phantom{\PP\bigg(} \left|\tilde \Pi \cap \left\{(u,w): \, u > \frac z c, \ 1 > \min_{1 \leq i \leq n} \frac{w_i}{w(y_i)}\right\}\right| =0, \nonumber \\
  & \phantom{\PP\bigg(} \left|\tilde \Pi \cap \left\{(u,w): \, u \leq \frac z c, \,
                      \frac u z > \min_{1 \leq i \leq n} \frac{w_i}{c w(y_i)}\right\}\right| =0\bigg)\nonumber \displaybreak[0]\\
  \leq{}&  \PP\bigg(\frac{Z(y_i)}{Z^K} \leq \frac{w_i} c, \, 1 \leq i \leq n, \, Z^K > z\bigg) \nonumber
                                     \displaybreak[0]\\
  \leq{}& \PP\bigg( \left|\tilde \Pi \cap \left\{(u,w): \, u > \frac z c, \, \max_{1 \leq i \leq n} \frac{w(y_i)}{w_i} \leq 1\right\}\right| > 0\bigg).\label{eq:sandwich}
 \end{align}
 The lower bound in \eqref{eq:sandwich} equals
 \begin{align*}
   & \left(1 - \exp\left(-\frac c z \PP(F^*(y_i) \leq w_i, \, 1 \leq i \leq n)\right)\right)\\ 
   & \cdot \exp\left(-\frac c z \PP\left(\max_{1 \leq i \leq n} \frac {F^*(y_i)}{w_i} > 1\right)\right)
   \exp\left( - \EE \int_{\frac z c  \wedge \min_{1 \leq i \leq n}  \frac{zw_i}{cF^*(y_i)} }^{\frac z c} u^{-2} \dd u\right)\\
 ={}& \left(1 - \exp\left(-\frac c z \PP(F^*(y_i) \leq w_i, \, 1 \leq i \leq n)\right)\right)\\ 
   & \cdot \exp\left(-\frac c z \PP\left(\max_{1 \leq i \leq n} \frac {F^*(y_i)}{w_i} > 1\right)\right)
     \exp\left(- \frac c z \EE \left( \max_{1 \leq i \leq n} \frac{F^*(y_i)}{w_i} -1\right)_{+}\right),
 \end{align*}
 while the upper bound equals
 \begin{align*}
  1 - \exp\left(-\frac c z \PP(F^*(y_i) \leq w_i, \, 1 \leq i \leq n)\right).
 \end{align*}
 Using $\PP(Z^K > z) = 1 - e^{-\frac c z}$ and
 taking the limit $z \to \infty$, inequation \eqref{eq:sandwich} yields
 \eqref{eq:w-aslim}.
\end{proof}

\begin{remark}
 Proposition \ref{prop:uniqueness} shows that the law of $F^*$ is uniquely
 determined on the $\sigma$-algebra generated by the cylinder sets
 $$ \{f \in [0,\infty)^K: \ f(t_i) \in B_i, \ 1 \leq i \leq m\},$$
 with $t_1, \ldots, t_m \in K$, $B_1, \ldots, B_m \in \mathcal{B} \cap [0,\infty),$ 
 and $m \in \NN$. If we restrict ourselves to continuous functions, the
 corresponding trace $\sigma$-algebra is the Borel $\sigma$-algebra. Thus,
 the normalized spectral representation is unique for sample-continuous
 processes. Uniqueness also holds true for some more general classes of
 processes, e.g.\ c\`adl\`ag processes on $\RR$.
\end{remark}

The following statement on the existence and uniqueness of the normalized
spectral representation follows directly from Propositions \ref{prop:c-finite}
and \ref{prop:uniqueness}.

\begin{corollary} \label{coro:spec-repr}
 Let $Z$ be a max-stable process as defined in \eqref{eq:def} such that
 $f \mapsto \sup_{y \in K} f(y)$ is measurable. Then,  $Z$ allows for a
 normalized spectral representation if and only if 
 $\sup_{y \in K} Z(y) < \infty$ a.s. In this case, the spectral process $F^*$
 is unique in the sense of finite-dimensional distributions.
\end{corollary}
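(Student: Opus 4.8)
The plan is to assemble the corollary directly from Propositions \ref{prop:c-finite} and \ref{prop:uniqueness}, specialized to the max-linear functional $L(f) = \sup_{y \in K} f(y)$, for which $L(Z) = Z^K$. First I would record that this $L$ is indeed max-linear (it is positively homogeneous and satisfies $L(\max(f,g)) = \max(L(f),L(g))$) and, by the standing hypothesis, measurable. To meet the requirement that $L$ be $(0,\infty)$-valued in Proposition \ref{prop:c-finite}, I would discard from $\HH$ the functions with $\sup_{y \in K} f(y) = 0$; these contribute neither to $Z$ nor to $c$, so removing them changes nothing, and the regularity condition \eqref{eq:g-regularity} for $g^*$ is then automatically satisfied.

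For the existence equivalence I would argue both implications through Proposition \ref{prop:c-finite}. Assuming $Z^K < \infty$ a.s.\ is exactly assertion 3 for our $L$, hence assertion 1 holds, i.e.\ $c = \int_\HH \sup_{y \in K} f(y)\,H(\dd f) < \infty$. But $c < \infty$ is precisely the condition under which the choice $g = g^*$ of \eqref{eq:g-star} satisfies the hypotheses of Proposition \ref{prop:1}; the resulting representation \eqref{eq:z-tilde}--\eqref{eq:sup-const} is, by construction, a normalized spectral representation in the sense of Definition \ref{def:spec-repr}. Conversely, if $Z$ admits such a representation $Z =^d \max_{t \in \Pi_0} t F^*_t$, then $Z$ has the form \eqref{eq:incremental} with $W = F^*$, so I would verify assertion 4 directly: $\EE L(F^*) = \EE \sup_{y \in K} F^*(y) = c < \infty$ by \eqref{eq:sup-const} and the requirement $c \in (0,\infty)$ in Definition \ref{def:spec-repr}. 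Assertion 3 then gives $Z^K < \infty$ a.s.

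For the uniqueness claim I would simply invoke Proposition \ref{prop:uniqueness}, part 2: whenever a normalized spectral representation exists, the finite-dimensional distributions of $F^*$ are given by the limit \eqref{eq:w-aslim}, whose right-hand side is a functional of the law of $Z$ alone. Hence any two normalized spectral representations of the same process $Z$ must have spectral processes $F^*$ with identical finite-dimensional distributions, which is the asserted uniqueness.

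Since every step is a direct citation of an already-proved statement, no substantial obstacle arises; the work is entirely one of bookkeeping. The only points demanding genuine care are confirming that $L(f) = \sup_{y \in K} f(y)$ may legitimately be treated as a $(0,\infty)$-valued max-linear functional (the null-function issue above), and checking that the explicit construction surrounding \eqref{eq:g-star} yields a representation matching Definition \ref{def:spec-repr} verbatim --- in particular that the modified spectral functions $\{c F_t / \sup_{y \in K} F_t(y)\}$ can be realized as i.i.d.\ copies $F^*_t$ with $\sup_{y \in K} F^*_t \equiv c$.
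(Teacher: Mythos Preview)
Your proposal is correct and follows exactly the route the paper indicates: the paper's proof consists of the single sentence that the statement ``follows directly from Propositions \ref{prop:c-finite} and \ref{prop:uniqueness},'' and you have simply spelled out this deduction in detail, including the natural bookkeeping about the null functions and the verification that the normalized representation is of the form \eqref{eq:incremental}.
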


\begin{remark}
  Note that the measurability of $f \mapsto \sup_{y \in K} f(y)$
  is ensured if $K$ is countable or if every spectral function $f \in \HH$ is
  upper semi-continuous and its subgraph
  $$ U(f) = \{ (y,z) \in K \times [0,\infty): \, f(y) \leq z\}$$
  satisfies $\overline{U(f)^\circ} = U(f)$, i.e.\ $U(f)$ equals the closure of
  its interior.  
\end{remark}

The normalized spectral representation is particularly useful in the
context of ``$m$-step representations'' as in \eqref{eq:stopping}. Recall
that, in the representation $Z = \max_{(t,f) \in \Pi} t f$, usually only few
points in the Poisson point process $\Pi$ contribute to $Z$ as pointed out in
the introduction. And the points $(t,f) \in \Pi$ satisfying 
$t < Z(y) / \sup_{f \in \HH} f(y)$ are not able to contribute to $Z(y)$,
$y \in K$. This statement also holds for the transformed set of spectral
functions $\{f/g(f)\}$ as constructed in Proposition \ref{prop:1}. Here,
points $(t,f) \in \tilde \Pi$ are not able to contribute if   
\begin{equation} \label{eq:stopping-g}
t < Z(y) / \sup_{f \in \HH} (g(f)^{-1} f(y)).
\end{equation}
In case of the normalized spectral representation, this implies that points
$(t,f) \in \tilde \Pi$ cannot contribute if $t < Z(y)/c$. In particularly,
the number of points which are able to contribute to $Z$ is finite a.s.\
provided that $\inf_{y \in K} Z(y) > 0$ with probability one. If the normalized
spectral representation does not exist, then, by Corollary \ref{coro:spec-repr},
$\PP(\sup_{y \in K} Z(y) = \infty) > 0$ (or, equivalently, $c=\infty$)
which makes the existence of an ``$m$-step representation'' doubtful. 
Later, we will show that the expected number of points which are able to
contribute to $Z$ is infinite for any spectral representation of type
\eqref{eq:0} in this case (see Remark \ref{rem:inf-number}). In general, 
however, this number depends on the choice of spectral functions as 
\eqref{eq:stopping-g} suggests. This observation gives rise to the question of
an optimal choice of spectral functions such that the number of points that are
able to contribute to $Z$ is minimized. We formulate this problem in the next
section and show that the normalized spectral representation solves a modified
version of the optimization problem.

\section{The optimization problem} \label{sec:optim}

In the following, we will always assume that we are in the framework of
Proposition \ref{prop:1}. Further, the process $Z$ is assumed to be almost
surely strictly positive on $K$, i.e.\
\begin{equation} \label{eq:Z-pos}
  \PP\left(\inf_{y \in K} Z(y) > 0\right) = 1.
\end{equation}

We are interested in minimizing the number of considered spectral functions. 
For the transformed spectral representation \eqref{eq:0} with spectral
functions $\{f/g(f)\}_{(t,f) \in \tilde \Pi}$, the stopping rule
\eqref{eq:stopping} can be formulated as follows. Denote
\begin{equation} \label{eq:z-m}
Z^{(m)}(y) = \max_{1 \leq i \leq m} \frac1{\sum_{j=1}^i E_j}\cdot \frac{f_i(y)}{g(f_i)},\qquad y\in K,
\end{equation}
for standard exponentially distributed random variables $E_j$ and $f_j\sim gH$, 
which are all independent. Let 
\begin{equation} \label{eq:z-infty}
 Z^{(\infty)} = \lim_{m\rightarrow \infty} Z^{(m)}.
\end{equation}
Then, $Z^{(\infty)} =^d Z$ and, for fixed $\omega \in \Omega$, we have 
$Z^{(m)} \equiv Z^{(\infty)}$ on $K$ if
\begin{equation} \label{eq:stopping-trafo}
 \esssup_{f \in \HH} \sup_{y \in K} \frac{f(y)}{g(f) Z^{(m)}(y)} \le \sum_{j=1}^{m+1} E_j,
\end{equation}
where the essential supremum is taken w.r.t.\ the probability measure $gH$.
Thus, for fixed $g$, we may exclude all the functions $f \in \HH$ with $g(f)=0$.
By \eqref{eq:g-regularity}, up to a set of $H$-measure zero, the set 
$\{f \in \HH: \ g(f)=0\}$ consists of functions $f \in \HH$ with 
$f\mid_K \equiv 0$.

For a choice of spectral functions that minimizes the number $m$ such that
\eqref{eq:stopping-trafo} holds, we need to determine at least one member of
\begin{equation*} 
\Qorig ={}  \arg\min_{g} \Qorig_{g}
\end{equation*}
where
\begin{align} \label{eq:original}
\Qorig_{g} ={} & \EE \min\bigg\{ m\in \NN : \esssup_{f \in \HH} \sup_{y\in K}
\frac{f(y)}{g(f) Z^{(m)}(y)} \le \sum_{j=1}^{m+1} E_j\bigg\}.
\end{align}

\begin{remark} \label{rem:infz-finite}
 By \eqref{eq:Z-pos}, we assume that $\inf_{y \in K} Z(y) > 0$ almost surely.
 If we additionally assume that 
 $\esssup_{f \in \HH} \sup_{y \in K} \frac{f(y)}{g(f)} < \infty,$
 this guarantees that, with probability one, the stopping rule
 \eqref{eq:stopping-trafo} holds for some finite $m$.

 Clearly, \eqref{eq:Z-pos} is satisfied if $Z$ is sample-continuous.
 Note that a much weaker assumption than sample-continuity already implies
 a much stronger statement than \eqref{eq:Z-pos}, namely
 \begin{equation} \label{eq:infz-finite}
   \EE \bigg[ \bigg(\inf_{y \in K} Z(y)\bigg)^{-1}\bigg] < \infty.
 \end{equation}
 For \eqref{eq:infz-finite} to hold, it suffices that, for every $y \in K$, 
 there exist an open set $U(y)$ containing  $y$, a set of spectral functions
 $M(y) \subset \HH$ with $H(M(y)) > 0$ and a real number $a(y) >0$ such that
 $f(x) > a(y)$ for all $f \in M(y)$ and $x \in U(y)$. 
 
 To see this, we first note that a finite set $Y = \{y_1, \ldots, y_n\} \subset K$
 exists such that $\bigcup_{i=1}^n U(y_i) \supset K$ as $K$ is compact. Without
 loss of generality, we may assume that the corresponding sets 
 $M(y_1), \ldots, M(y_n)$ are pairwise disjoint. Otherwise, i.e.\ if 
 $H(M(y_i) \cap M(y_j)) > 0$ for some $j \neq i$, the indices $i$ and $j$ can
 be merged by considering $U(y_i) \cup U(y_j)$ instead of $U(y_i)$, 
 $M(y_i) \cap M(y_j)$ instead of $M(y_i)$, $\min\{a(y_i), a(y_j)\}$ instead of
 $a(y_i)$ and $Y \setminus \{y_j\}$ instead of $Y$.
 Now, for any $z > 0$, we have that
 \begin{align*}
   \PP\left( \inf_{y \in K} Z(y) \geq z\right)
 \geq{} & \PP\left( | \Pi \cap (z/a(y_i),\infty) \times M(y_i)| > 0,\, 1 \leq i \leq n\right) \displaybreak[0]\\
 ={}  \prod_{i=1}^n \bigg(&1 - \exp\left( - \frac{H(M(y_i)) a(y_i)} z\right)\bigg)
 \geq{} \left(1 - \exp\left(- \frac {p}{z}\right)\right)^n,
 \end{align*}
 where $p = \min_{1 \leq i \leq n} H(M(y_i)) a(y_i) > 0$.
 By Bernoulli's inequality, we obtain
 $$\PP\left(\inf_{y \in K} Z(y) \geq z\right) \geq 1 - n \exp\left(- \frac{p} z \right)$$
 or, equivalently,
 $$\textstyle \PP\left( \left( \inf_{y \in K} Z(y)\right)^{-1} > z\right) \leq n \exp\left(- p z\right).$$
 Thus,
 $$ \textstyle \EE \left[ \left(\inf_{y \in K} Z(y)\right)^{-1}\right] \leq \int_0^\infty n \exp\left(- p z\right) \dd z < \infty.$$
\end{remark}

The optimization problem \eqref{eq:original} is difficult to solve because
both the numerator and the denominator of $$\frac{f(y)}{g(f) Z^{(m)}(y)}$$
depend on $y$ and the denominator is stochastic.
Hence, we consider some modified versions of the optimization problem whose
solutions are expected to be rather close to that of the actual problem.

\subsection{A modified optimization problem} \label{subsec:simple}

Recall that the stopping rule \eqref{eq:stopping-trafo} requires that
$$ \esssup_{f \in \HH} \sup_{y \in K} \frac{f(y)}{g(f) Z^{(m)}(y)} \leq \sum_{j=1}^{m+1} E_j.$$
A stronger condition than \eqref{eq:stopping-trafo} is then
\begin{equation} \label{eq:cond-strong}
 \esssup_{f \in \HH} \frac{\sup_{y \in K} f(y)}{g(f) \inf_{\tilde y \in K} Z^{(m)}(\tilde y)} \leq \sum_{j=1}^{m+1} E_j,
\end{equation}
while a weaker condition is
\begin{equation} \label{eq:cond-weak}
 \esssup_{f \in \HH} \frac{\sup_{y \in K} f(y)}{g(f) \sup_{\tilde y \in K} Z^{(m)}(\tilde y)} \leq \sum_{j=1}^{m+1} E_j.
\end{equation}
The actual stopping rule is in between the strong and the weak condition.
Suppose $T: [0,\infty)^K \to [0,\infty)$ is a functional that satisfies 
$T(\1) = 1$ and that is max-linear, i.e.\
\begin{equation*}
 \quad T(\max\{a_1 h_1, a_2 h_2\}) = \max \{ a_1 T(h_1), a_2 T(h_2) \}
\end{equation*}
for all $a_1, a_2 > 0$ and $h_1, h_2: K \to [0,\infty)$.
Then, we have that $T(h) \le T(g)$ for all $h\le g$, which leads to
\begin{align} \label{eq:F-bounds}
 \textstyle \inf_{y \in K} h(y) \leq T(h) \leq \sup_{y \in K} h(y)
\end{align}
for all $h: K \to [0,\infty)$.
We consider the condition
\begin{equation} \label{eq:cond-maxlin}
   \esssup_{f \in \HH} \frac{\sup_{y \in K} f(y)}{T(Z^{(m)}) g(f)} \leq \sum_{j=1}^{m+1} E_j 
\end{equation}
for some suitable $T$, and regard the new condition \eqref{eq:cond-maxlin}
as a proxy for the actual stopping rule \eqref{eq:stopping-trafo}.
Apparently, condition \eqref{eq:cond-maxlin} 
lies in between the strong condition
\eqref{eq:cond-strong} and the weak condition \eqref{eq:cond-weak}.
The corresponding modified optimization problem is then
\begin{align}
\Qproxi {}={}& \arg\min_{g} \Qproxi_{g}, \nonumber\\
\Qproxi_{g} {}={}& \EE \min\bigg\{ m\in \NN : \esssup_{f\in\HH}
\frac{\sup_{y \in K} f(y)}{g(f) T(Z^{(m)})} 
\le \sum_{j=1}^{m+1} E_j \bigg\}.   \label{eq:simple}
\end{align}
In fact, Proposition \ref{prop:simplification} below
shows that the solution of the modified problem in 
\eqref{eq:simple} is not related to the particular choice of the
max-linear functional $T$. Therefore, we regard the solution of the
modified optimization problem in \eqref{eq:simple} as a good proxy to
that of the original problem in \eqref{eq:original}.

Examples of $T$ are $T(h) = \sup_{y \in K} h(y)$ and $T(h) = h(y_0)$
for some $y_0 \in K$. Thus, we get that minimizing
\begin{align}
\Qspec1_{g}& {}={} \EE \min\bigg\{ m\in \NN: \esssup_{f\in\HH}
\frac{\sup_{y \in K} f(y)} {g(f) \sup_{\tilde y \in K} Z^{(m)}(\tilde y)}
\le \sum_{j=1}^{m+1} E_j \bigg\}, \label{eq:simple-1}\\
\text{or} \qquad & \nonumber\\
\Qspec2_{g}&(y_0) {}={} \EE \min\bigg\{ m\in \NN: \esssup_{f\in\HH} 
\frac{\sup_{y \in K} f(y)}{g(f) Z^{(m)}(y_0)}
\le \sum_{j=1}^{m+1} E_j \bigg\}, \label{eq:simple-2}
\end{align}
are important modifications of the original optimization problem.

\subsection{The solution of the modified optimization problem}

The following proposition provides a first step to the solution
of the modified optimization problem.

\begin{proposition}\label{prop:simplification}
Let $f \mapsto \sup_{y \in K} f(y)$ be measurable. Then, 
\begin{equation*}
\Qproxi = \arg\min_{g} \esssup_{f\in\HH} \frac{\sup_{y \in K} f(y)}{g(f)}.
\end{equation*}
In particular, $\Qproxi$ does not depend on the choice of $T$.
\end{proposition}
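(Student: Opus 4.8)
The plan is to exploit that $T$ is max-linear with $T(\1)=1$. Writing $Z^{(m)}=\max_{1\le i\le m} t_i f_i/g(f_i)$ with $t_i=1/\sum_{j=1}^i E_j$ as in \eqref{eq:z-m}, max-linearity gives $T(Z^{(m)})=\max_{1\le i\le m} t_i\,T(f_i)/g(f_i)$, and since the essential supremum in \eqref{eq:cond-maxlin} runs over $f$ while $T(Z^{(m)})$ does not involve $f$, I can factor it out. Setting
\[
 S(g):=\esssup_{f\in\HH}\frac{\sup_{y\in K}f(y)}{g(f)},
\]
the stopping condition \eqref{eq:cond-maxlin} becomes $S(g)\le T(Z^{(m)})\sum_{j=1}^{m+1}E_j$. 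Introducing the normalised marks $N_i:=T(f_i)/\big(g(f_i)\,S(g)\big)$, which by \eqref{eq:F-bounds} satisfy $0<N_i\le 1$ almost surely, this reads $t_{m+1}\le\max_{1\le i\le m}t_iN_i$, so the factor $S(g)$ cancels. Hence $\Qproxi_g=\EE\,m$ depends on the pair $(T,g)$ only through the law $\nu_g$ of $N$ (the $t_i$ being independent of the $f_i$, with a $g$-free law). Since this is the \emph{only} place $T$ enters, the asserted independence of $T$ will be automatic once I show the minimiser over $g$ is the explicit $g^*$ of \eqref{eq:g-star}.

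\textbf{Step 2: the minimiser of $S(g)$.}
Next I solve $\arg\min_g S(g)$ directly. For every admissible $g$,
\[
 c=\int_\HH \sup_{y\in K}f(y)\,H(\dd f)=\int_\HH\frac{\sup_{y\in K}f(y)}{g(f)}\,g(f)\,H(\dd f)\le S(g)\int_\HH g(f)\,H(\dd f)=S(g),
\]
where \eqref{eq:g-regularity} lets me pass between the essential suprema with respect to $gH$ and to $H$. Equality forces $\sup_{y\in K}f(y)/g(f)$ to be $gH$-a.s.\ constant, i.e.\ $g=g^*$, with value $S(g^*)=c$. Thus $g^*$ is the unique minimiser of the right-hand side of the proposition.

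\textbf{Step 3: $g^*$ also minimises $\EE\,m$.}
It remains to show that the $g$ minimising $\Qproxi_g$ is again $g^*$. The mechanism is visible at first order: $\EE N=c_T/S(g)$ with $c_T:=\int_\HH T(f)\,H(\dd f)$ a $g$-free constant, and since $t_2/t_1$ is uniform on $(0,1)$ one finds $\PP(m=1)=\EE N$, hence $\PP(m\ge 2)=1-\EE N$; so minimising $S(g)$ maximises $\EE N$ and minimises $\PP(m\ge 2)$. To upgrade this to the full law of $m$, I would realise every $g$-representation simultaneously on one Poisson point process of intensity $t^{-2}\,\dd t\,H(\dd f)$, on which the per-point value $s_kT(\phi_k)$ is $g$-free while the admissibility bound $N_i\le 1$ tightens toward its cap precisely as $S(g)\downarrow c$. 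The target is that the extremal choice $g^*$, for which $\sup_{y}f/g(f)\equiv c$ is constant, yields marks $\nu_{g^*}$ that stochastically dominate $\nu_g$ for every admissible $g$; given such domination, a monotone coupling of the i.i.d.\ marks makes $\max_{1\le i\le m}t_iN_i$ pointwise largest under $g^*$, so the stopping time is pathwise smallest and $\Qproxi_{g^*}\le\Qproxi_g$. Together with Step 2 this gives $\Qproxi=g^*=\arg\min_g S(g)$, an expression free of $T$.

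\textbf{Main obstacle.}
The hard part is Step 3 beyond its first moment: $\Qproxi_g$ is \emph{not} a function of $S(g)$ alone, because already $\PP(m\ge 3)$ involves $\EE[\max(N_1,N_2)^2]$ and $\EE[N_1\max(N_1,N_2)]$, and hence the whole shape of $\nu_g$ rather than just $\EE N=c_T/S(g)$. Consequently first-moment monotonicity does not by itself single out the optimiser, and the substance of the argument lies in proving the stochastic domination $\nu_g\preceq\nu_{g^*}$ (equivalently, the pathwise minimality of the $g^*$-stopping time). This is exactly where the extremal structure of $g^*$—constancy of $\sup_{y}f/g(f)$, which forces the largest realised values to the earliest indices in the $t$-ordering—must be exploited in full, and I expect it to be the decisive step.
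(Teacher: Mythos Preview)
Your Steps 1 and 2 are fine, but Step 3 rests on a false premise. You assert that $\Qproxi_g$ is \emph{not} a function of $S(g)$ alone and therefore try to establish a stochastic-domination lemma $\nu_g\preceq\nu_{g^*}$ that you cannot complete. In fact $\Qproxi_g$ \emph{is} a function of $S(g)$ alone: the paper shows the closed form $\Qproxi_g=S(g)/c_T$ with $c_T=\int_\HH T(f)\,H(\dd f)$ independent of $g$, so that $\arg\min_g\Qproxi_g=\arg\min_g S(g)$ follows immediately. The ingredient you are missing is that the vector $\big(\sum_{j=1}^k E_j/\sum_{j=1}^{m+1}E_j\big)_{k=1}^m$ has the law of the order statistics $U_{(1)}<\cdots<U_{(m)}$ of $m$ i.i.d.\ uniforms on $[0,1]$. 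Since the $N_k$ are i.i.d.\ and independent of these, a random-permutation argument gives
\[
\PP(M>m)=\PP\big(U_{(k)}>N_k,\ 1\le k\le m\big)=\PP\big(V_k>N_k,\ 1\le k\le m\big)=(1-\EE N)^m,
\]
where $V_1,\ldots,V_m$ are the \emph{unordered} uniforms. Thus $M$ is geometric with parameter $\EE N=c_T/S(g)$, and $\Qproxi_g=1/\EE N$ depends on $g$ only through $S(g)$.

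Your computation that $\PP(M\ge 3)$ involves $\EE[\max(N_1,N_2)^2]$ must contain an error; redo it using the order-statistics identity and you will find $\PP(M\ge 3)=(1-\EE N)^2$, confirming that only the first moment of $N$ matters. The stochastic-domination route is therefore unnecessary (and, for general $T$, the ordering $\nu_g\preceq\nu_{g^*}$ you aim for need not hold, so the approach would likely fail anyway).
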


\begin{proof}
  If there exists some $g$ such that $\Qproxi_{g}$ is finite, then necessarily
  $$\esssup_{f \in H} \frac{\sup_{y \in K} f(y)}{g(f)} < \infty.$$
  Thus, we can restrict ourselves to 
  $$ g \in D = \bigg\{g: \,\esssup_{f\in\HH} \frac{\sup_{y \in K} f(y)}{g(f)} < \infty\bigg\}$$
  and assume w.l.o.g.\ that $D \neq \emptyset$.
  For $c = \int_{\HH} \sup_{y \in K} f(y) \, H(\dd f)$ and any
  $g \in D$, we have
  \begin{align}
  c \leq{} & \int_{\HH} \esssup_{h \in \HH} \frac{\sup_{y \in K} h(y)}{g(h)} g(f) \, H(\dd f) 
  {}={} \esssup_{h \in \HH} \frac{\sup_{y \in K} h(y)}{g(h)} < \infty. \label{eq:c-bound}
  \end{align}
  Thus, by \eqref{eq:F-bounds}, for $c_T =  \int_{\HH} T(f) \, H(\dd f)$, 
  we obtain $c_T \leq c < \infty$.
  
  Next, we prove $c_T >0$ by contradiction. Assume that $c_T = 0$. This yields
  $T(f) = 0$ for $H$-a.e.\ $f \in \HH$ which -- by the max-linearity of $Z$ --
  implies $T(Z) = 0$ a.s.\ in contradiction to $\inf_{y \in K} Z(y) > 0$ a.s.\
  and \eqref{eq:F-bounds}. Thus, we conclude that $c_T \in (0,\infty)$.  
  
  Now, let $g \in D$. Using the max-stability of $T$, we have
  \begin{align}
   \Qproxi_{g} -1 
  ={}& \sum_{m=1}^\infty \PP\left( \esssup_{f \in \HH} \sup_{y \in K} 
  \frac{f(y)}{g(f)} \bigg/ \sum_{j=1}^{m+1} E_j > \max_{1 \leq k \leq m} \frac 1 {\sum_{j=1}^k E_j}
  \frac{T(f_k)}{g(f_k)}  \right) \nonumber\\
  ={}& \sum_{m=1}^\infty \PP\left( \frac{\sum_{j=1}^k E_j}{\sum_{j=1}^{m+1} 
  E_j} > \frac{T(f_k)}{g(f_k)} \bigg/ \esssup_{f \in \HH} \sup_{y \in K} \frac{f(y)}{g(f)},
  \ 1 \leq k \leq m \right). \label{eq:ej}
  \end{align}
  Note that 
  $$\frac{T(f_k)}{g(f_k)} \Big/ \esssup_{f \in H} \sup_{y \in K} \frac{f(y)}{g(f)}
  \in [0,1].$$ As the joint distribution of 
  $(\sum_{j=1}^k E_j \big/ \sum_{j=1}^{m+1} E_j)_{k=1,\ldots,m}$ equals the
  joint distribution of the order statistics of $m$ independent random 
  variables which are uniformly distributed on $[0,1]$, we obtain  
  \begin{align} 
  & \Qproxi_{g} {}={} 1 + \sum_{m=1}^\infty 
  \bigg[1 - \EE \left(\frac{T(f_1)}{g(f_1)} \right)
  \Big/ \esssup_{f \in \HH} \sup_{y\in K} \frac{f(y)}{g(f)}\bigg]^m \label{eq:\Qproxi-calc}
  \displaybreak[0]\\
  ={} \esssup_{f \in \HH} & \sup_{y\in K} \frac{f(y)}{g(f)} \Big/  
  \EE \left(\frac{T(f_1)}{g(f_1)} \right)
  ={} \esssup_{f\in\HH} \frac{\sup_{y \in K} f(y)}{g(f)} \Big/
  \int_{\HH} T(f_1) \, H(\dd f_1). \nonumber
  \end{align}
  This finishes the proof since $c_T \in (0,\infty)$. 
\end{proof}

\begin{remark}
 From the proof of Proposition \ref{prop:simplification} we get that the random
 variable $M = \min\{m \in \NN: \eqref{eq:cond-maxlin} \text{ is satisfied}\}$
 follows a geometric distribution with parameter
 $\int_{\HH} T(h) H(\dd h) \big/ \esssup_{f\in\HH} \sup_{y\in K} (f(y)/g(f))$.
 Therefore, minimizing $\esssup_{f \in \HH} \sup_{y \in K} (f(y)/g(f))$
 will not only minimize the expectation of $M$, but also other characteristics
 such as $\PP(M > m_0)$ for $m_0 \in \NN$. However, this property may not hold
 for the stochastic number $m$ in the actual stopping rule 
 \eqref{eq:stopping-trafo}.
\end{remark}

We carry on to find the density $g$ that minimizes 
$\esssup_{f \in \HH} \frac{\sup_{y \in K} f(y)}{g(f)}$.
Instead of considering $f \mapsto \sup_{y \in K} f(y)$ in the numerator, the
following theorem deals with a broader class of functionals.

\begin{theorem} \label{thm:argmin-element}
Assume we are in the framework of Proposition \ref{prop:1}.
Let $L: \HH \to (0,\infty)$ be measurable and
$c_L := \int_{\HH} L(f) H(\dd f) < \infty$. 
Then, 
\begin{equation} \label{eq:optimal-g}
  g^*(f) = c_L^{-1} L(f)
\end{equation}
is an element of 
$$ \Qproxi_{L} = \arg\min_{g} \esssup_{f \in \HH} \frac{L(f)}{g(f)}.$$    
Furthermore, for every $g \in \Qproxi_{L}$, Equation \eqref{eq:optimal-g} holds
for $H$-a.e.\ $f \in \HH$.
\end{theorem}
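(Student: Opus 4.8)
The plan is to show that the minimal value of $\esssup_{f \in \HH} L(f)/g(f)$ over admissible densities $g$ equals $c_L$, that this value is attained at $g^*$, and that equality forces $g = g^*$ $H$-a.e. First I would record that the relevant essential supremum, being finite, is only of interest when $g > 0$ holds $H$-a.e.: since $L > 0$ everywhere, the ratio $L/g$ equals $+\infty$ on $\{g = 0\}$, so any $g$ with $\esssup_f L(f)/g(f) < \infty$ must satisfy $H(\{g = 0\}) = 0$. It therefore suffices to optimize over densities that are strictly positive $H$-a.e., and on this class the essential supremum with respect to $H$ coincides with the one with respect to $gH$ used earlier.

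For the lower bound I would reproduce the computation behind \eqref{eq:c-bound}. Writing $S := \esssup_{f \in \HH} L(f)/g(f)$, the defining property of the essential supremum gives $L(f) \leq S\, g(f)$ for $H$-a.e.\ $f \in \HH$. Integrating this pointwise inequality against $H$ and using $\int_\HH g(f)\, H(\dd f) = 1$ yields the single line
\[
c_L = \int_\HH L(f)\, H(\dd f) = \int_\HH \frac{L(f)}{g(f)}\, g(f)\, H(\dd f) \leq S \int_\HH g(f)\, H(\dd f) = S,
\]
so that $\esssup_{f \in \HH} L(f)/g(f) \geq c_L$ for every admissible $g$.

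Next I would check that $g^* = c_L^{-1} L$ is admissible and attains this bound. It is nonnegative, integrates to one since $\int_\HH g^*(f)\, H(\dd f) = c_L^{-1} \int_\HH L(f)\, H(\dd f) = 1$, and is strictly positive everywhere, so the regularity condition \eqref{eq:g-regularity} holds trivially; moreover $L(f)/g^*(f) = c_L$ identically, whence $\esssup_{f \in \HH} L(f)/g^*(f) = c_L$. Combined with the lower bound, this proves $\min_g \esssup_{f \in \HH} L(f)/g(f) = c_L$ and hence $g^* \in \Qproxi_{L}$.

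Finally, for the $H$-a.e.\ uniqueness, suppose $g$ is any minimizer, so that $\esssup_{f \in \HH} L(f)/g(f) = c_L$ and therefore $L(f) \leq c_L\, g(f)$ for $H$-a.e.\ $f$. The function $c_L g - L$ is then nonnegative $H$-a.e., yet $\int_\HH (c_L g(f) - L(f))\, H(\dd f) = c_L \cdot 1 - c_L = 0$; a nonnegative integrand with vanishing integral must vanish $H$-a.e., so $c_L g(f) = L(f)$, i.e.\ $g(f) = g^*(f)$, for $H$-a.e.\ $f$. I expect the only delicate point to be the handling of the set $\{g = 0\}$ in the lower-bound and uniqueness steps, namely guaranteeing that the inequality extracted from the essential supremum is valid $H$-a.e.\ rather than merely $gH$-a.e.; this is precisely what the positivity of $L$ (forcing $g > 0$ $H$-a.e.) resolves, so no genuine obstruction remains.
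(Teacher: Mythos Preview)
Your proof is correct and follows essentially the same approach as the paper: establish the lower bound $\esssup_f L(f)/g(f) \geq c_L$ by integrating the pointwise inequality $L \leq S g$, verify that $g^*$ attains it, and deduce uniqueness from the fact that both $g$ and $g^*$ integrate to one. The only cosmetic differences are that the paper phrases the lower bound and the uniqueness step as arguments by contradiction, and that it invokes condition~\eqref{eq:g-regularity} to ensure $g>0$ on the relevant set, whereas you obtain this directly from $L>0$; your route is in fact slightly cleaner for general $L$.
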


\begin{proof}
 First, by contradiction, we show that the inequality 
 \begin{equation} \label{eq:opt-bound}
 \esssup_{f \in H} \frac{L(f)}{g(f)} \geq c_L 
 \end{equation}
 holds for all $g$. So, assume that \eqref{eq:opt-bound} does not hold for some
 $g$ considered in Proposition \ref{prop:1}. Then some $\varepsilon > 0$ and
 some density $g$ with $\int g(f) \, H(\dd f)= 1$ exist such that, for all
 $f \in \HH$, we have $ L(f) / g(f) \le c_L - \varepsilon.$
 Hence,
 $$c_L = \int_\HH L(f) \, H(\dd f) \le \left(c_L - \varepsilon \right) 
       \int_\HH g(f) H(\dd f) < c_L $$
 which is a contradiction. Hence, \eqref{eq:opt-bound} is proved. Note that the
 choice $g(f) = c_L^{-1} L(f)$ implies equality in \eqref{eq:opt-bound}.
 The first assertion follows.
  
 For the proof of the second assertion, assume that there is some 
 $g \in \Qproxi_{L}$ such that \eqref{eq:optimal-g} does not hold for $H$-a.e.\
 $f \in \HH$. Then, as
 $$\int_\HH g(f) \,H(\dd f) = 1 = \int_{\HH} c_L^{-1} L(f) H(\dd f),$$
 we get that there is some set $A \subset \HH$ with $H(A) > 0$ such that, for 
 all $f \in A$, $g(f) < c^{-1} L(f)$, but $g(f) > 0$ by 
 \eqref{eq:g-regularity}. This yields $gH(A) > 0$ and, hence, 
 $ \esssup_{f \in \HH} L(f)/g(f) > c_L$, 
 which is a contradiction to $g \in \Qproxi_{L}$.
\end{proof}

The results stated above enable us to give a necessary and sufficient
condition for the solvability of the optimization problem \eqref{eq:simple}
and to describe its solution. Here, we call an optimization problem
$$\textstyle \arg\min_{x \in A} h(x), \quad h: A \to \RR \cup \{\infty\},$$
\emph{solvable} if $\inf_{x \in A} h(x) \in (-\infty, \infty)$ and if there
exists some $x_0 \in A$ such that $h(x_0) = \inf_{x \in A} h(x)$.

\begin{corollary} \label{coro:simple-special}
 Let $f \mapsto \sup_{y \in K} f(y)$ be measurable. Then, the
 optimization problem \eqref{eq:simple} is solvable if and only if
  $$c = \int_\HH \sup_{y \in K} f(y) H(\dd f) < \infty.$$ In this case,
 $$g^* \in \Qproxi = \arg\min_{g} \Qspec1_{g} = \arg\min_{g} \Qspec2_{g}(y_0)$$
 with $g^*$ as defined in \eqref{eq:g-star}, that is, the normalized spectral
 representation is optimal w.r.t.\ \eqref{eq:simple}. The solution is
 unique $H$-a.s.
\end{corollary}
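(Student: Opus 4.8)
The plan is to read this corollary as the specialization of Theorem \ref{thm:argmin-element} to the functional $L(f) = \sup_{y \in K} f(y)$, glued to the reduction furnished by Proposition \ref{prop:simplification}. First I would use that proposition to replace the objective $\Qproxi_g$ by the simpler quantity $\Phi(g) := \esssup_{f \in \HH} \frac{\sup_{y \in K} f(y)}{g(f)}$. The two displayed objectives $\Qspec1_g$ and $\Qspec2_g(y_0)$ arise from the max-linear functionals $T(h) = \sup_{y \in K} h(y)$ and $T(h) = h(y_0)$, both of which satisfy $T(\1) = 1$; the identity $\Qproxi_g = \Phi(g) / c_T$ established in the proof of Proposition \ref{prop:simplification}, with $c_T = \int_\HH T(f) \, H(\dd f) \in (0,\infty)$, then shows that all three objectives share the same set of minimizing densities, since the positive constant $c_T$ is irrelevant to the argmin. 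Hence it suffices to analyze $\arg\min_g \Phi(g)$.

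Next I would record the lower bound $\Phi(g) \ge c$ for every admissible $g$, valid whether or not $c$ is finite. Indeed, if $\Phi(g) = M < \infty$, then $\sup_{y \in K} f(y) \le M g(f)$ for $H$-a.e.\ $f$; integrating against $H$ and using $\int_\HH g(f) \, H(\dd f) = 1$ yields $c \le M$. This immediately gives the ``only if'' part of the equivalence: when $c = \infty$ we obtain $\Phi(g) = \infty$ for every $g$, so $\inf_g \Qproxi_g = \infty$ and the problem \eqref{eq:simple} fails to be solvable.

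For the converse, together with the explicit solution and its uniqueness, I would assume $c < \infty$ and invoke Theorem \ref{thm:argmin-element} with $L(f) = \sup_{y \in K} f(y)$, so that $c_L = c$. The theorem shows that $g^*$ from \eqref{eq:g-star} attains $\Phi(g^*) = c$ and lies in $\arg\min_g \Phi(g)$, whence $\Qproxi_{g^*} = c / c_T < \infty$ and \eqref{eq:simple} is solvable; its second assertion delivers the $H$-a.s.\ uniqueness, since any minimizer of $\Phi$ must coincide with $c^{-1} \sup_{y \in K} f(y)$ for $H$-a.e.\ $f$, and this transfers to all three argmin sets by the reduction above.

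The one point requiring care is that Theorem \ref{thm:argmin-element} carries the standing hypothesis $c_L < \infty$, so its lower bound cannot simply be quoted when $c = \infty$. This is exactly why the short integration argument of the second paragraph, reproving $\Phi(g) \ge c$ without any finiteness assumption, is needed to close the ``only if'' direction; everything else is a direct assembly of Proposition \ref{prop:simplification} and Theorem \ref{thm:argmin-element}.
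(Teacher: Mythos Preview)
Your proposal is correct and follows essentially the same route as the paper: the paper's proof invokes Proposition~\ref{prop:simplification} (including the bound \eqref{eq:c-bound}, which is precisely your integration argument $\Phi(g)\ge c$) to handle the case $c=\infty$, and then applies Theorem~\ref{thm:argmin-element} with $L(f)=\sup_{y\in K}f(y)$ and the two choices $T(h)=\sup_{y\in K}h(y)$, $T(h)=h(y_0)$ for the case $c<\infty$. Your write-up is somewhat more explicit about why the lower bound must be established independently of Theorem~\ref{thm:argmin-element}, but the logical structure is the same.
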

\begin{proof}
 If $c=\infty$, Equation \eqref{eq:c-bound} and Proposition 
 \ref{prop:simplification} yield that \eqref{eq:simple} is not solvable.
 For $c<\infty$, the assertion follows directly from Proposition 
 \ref{prop:simplification} and Theorem \ref{thm:argmin-element} with
 $L(f) = \sup_{y \in K} f(y)$ and
 $T(h) = \sup_{y\in K} h(y)$ and $T(h) = h(y_0)$, respectively.
\end{proof}

\begin{remark}
 Note that the solution of the optimization problem \eqref{eq:simple}
 is unique in two different aspects. First, any solution $g \in \Qproxi$
 satisfies $g=g^*$ $H$-a.s. Second, due to the uniqueness of the normalized
 spectral representation (Proposition \ref{prop:uniqueness}), the 
 finite-dimensional distributions of the spectral functions $\{f/g(f)\}$
 do not depend on the initial choice of the spectral functions, i.e.\ on the
 choice of the space $\HH$ and the measure $H$.
\end{remark}

\begin{remark} \label{rem:inf-number}
 Corollary \ref{coro:simple-special} yields that $\Qspec1_{g} = \infty$ if $Z$
 does not allow for a normalized spectral representation (or, equivalently,
 $c=\infty$). As the definitions imply that $\Qspec1_{g} \leq \Qorig_g$ for any
 $g$, there is no spectral representation such that the expected number of 
 points which are able to contribute to $Z$ is finite in this case.
\end{remark}

\section{Evaluating the modified optimization problem} \label{sec:refine}

In this section, we discuss how close the relation is between the modified
optimization problem and the original problem. Observing that
$\Qspec1_g \leq \Qspec2_g(y_0) \leq \Qorig_g$ for all $g$ (see the proof of
Proposition \ref{prop:bound-1}), we see that the modified optimization problem
is in fact minimizing a lower bound function of the mapping 
$g \mapsto \Qorig_g$. We will improve the lower bound and show that the
normalized spectral representation also minimizes the improved lower bound
function. In addition, we give a formula for calculating $\Qorig_g$. In
particular, this formula allows for the calculation of $\Qorig_{g^*}$, that is,
the expected number of points which are able to contribute to $Z$ in the
normalized spectral representation regarding the actual stopping rule
\eqref{eq:stopping-trafo}. In the following proposition, we especially look at
$\Qspec1_{g^*}$, $\Qspec2_{g^*}(y_0)$ and $\Qorig_{g^*}$ to get bounds for the
real optimal solution.

\begin{proposition} \label{prop:bound-1}

 \begin{itemize}
  \item[1. ] The function $y_0 \mapsto \Qspec2(y_0)$ is constant on $K$.
  \item[2. ] $\Qspec1_g \leq \Qorig_g$ and $\Qspec1_g \leq \Qspec2_g(y_0)$, 
             $y_0 \in K$, for all $g$.
  \item[3. ] $\Qspec1_{g^*} = 1$.
  \item[4. ] Assuming further that there is a countable set 
     $K_0 \subset K$ such that
     $\sup_{y \in K} f(y) = \sup_{y \in K_0} f(y)$ for $H$-a.e.\ $f \in \HH$,
     we obtain that $$\Qspec2_g(y_0) \leq \Qorig_g$$
     for all $g$ and all $y_0 \in K$.
     In particular, for any solution of the original optimization problem,
     $\tilde g \in \Qorig$, we get the bounds
     $$ 1 = \Qspec1_{g^*} \leq \Qspec2_{g^*}(y_0) \leq \Qorig_{\tilde g}
        \leq \Qorig_{g^*}.$$
 \end{itemize}
\end{proposition}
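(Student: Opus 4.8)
The plan is to reduce Parts 1--3 to the closed form of the proxy objective extracted in the proof of Proposition \ref{prop:simplification},
$$\Qproxi_g = \esssup_{f\in\HH}\frac{\sup_{y\in K}f(y)}{g(f)}\Big/\int_\HH T(f)\,H(\dd f),$$
specialized to the two max-linear functionals that generate the problems at hand: $T(h)=\sup_{y\in K}h(y)$ yields $\Qspec1_g$, while $T(h)=h(y_0)$ yields $\Qspec2_g(y_0)$. Writing $S_g:=\esssup_{f\in\HH}\sup_{y\in K}f(y)/g(f)$, the two denominators evaluate to $\int_\HH\sup_{y\in K}f(y)\,H(\dd f)=c$ and, by \eqref{eq:std-frechet}, to $\int_\HH f(y_0)\,H(\dd f)=1$. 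Hence $\Qspec1_g=S_g/c$ and $\Qspec2_g(y_0)=S_g$.

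Part 1 is then immediate: $\Qspec2_g(y_0)=S_g$ carries no dependence on $y_0$ for any $g$, so the optimal value is constant on $K$. For Part 2 I would argue pathwise on the stopping rules. Since $Z^{(m)}(y_0)\le\sup_{\tilde y\in K}Z^{(m)}(\tilde y)$, the statistic in \eqref{eq:simple-1} never exceeds the one in \eqref{eq:simple-2}, so whenever the $\Qspec2_g(y_0)$-rule stops the $\Qspec1_g$-rule has already stopped, giving $\Qspec1_g\le\Qspec2_g(y_0)$; likewise, bounding $\sup_{y\in K}f(y)/Z^{(m)}(y)\ge\sup_{y\in K}f(y)/\sup_{\tilde y\in K}Z^{(m)}(\tilde y)$ shows the statistic of \eqref{eq:stopping-trafo} dominates that of \eqref{eq:simple-1}, whence $\Qspec1_g\le\Qorig_g$. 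Both comparisons hold realization by realization and pass to expectations. Part 3 is a one-line evaluation: with $g^*=c^{-1}\sup_{y\in K}f$ one has $\sup_{y\in K}f(y)/g^*(f)\equiv c$ off the $H$-null set excluded by \eqref{eq:g-regularity}, so $S_{g^*}=c$ and $\Qspec1_{g^*}=c/c=1$.

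Part 4 is the substantive step. For fixed $y_0\in K$ I would lower-bound the original stopping statistic $A_m:=\esssup_{f\in\HH}\sup_{y\in K}f(y)/(g(f)Z^{(m)}(y))$ by keeping only the index $y=y_0$ in the inner supremum, obtaining $A_m\ge S(y_0)/Z^{(m)}(y_0)$ with $S(y_0):=\esssup_{f\in\HH}f(y_0)/g(f)$. The right-hand side is exactly the stopping statistic of a proxy problem with evaluation functional $L(f)=T(f)=f(y_0)$; since this statistic is dominated by $A_m$, that proxy rule stops no later than the $\Qorig_g$-rule, and the geometric-distribution computation of Proposition \ref{prop:simplification} evaluates its mean to $S(y_0)/\int_\HH f(y_0)\,H(\dd f)=S(y_0)$. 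Thus $\Qorig_g\ge S(y_0)$ for every $y_0\in K$, so $\Qorig_g\ge\sup_{y_0\in K}S(y_0)$. The hard part is identifying this supremum with $S_g=\Qspec2_g(y_0)$: this is an interchange of the supremum over $y_0$ with the essential supremum over $f$, and in general one only has $\sup_{y_0}\esssup_f f(y_0)/g(f)\le\esssup_f\sup_{y\in K}f(y)/g(f)$, possibly strictly. This is precisely where the countability hypothesis enters; replacing $\sup_{y\in K}f$ by $\sup_{y\in K_0}f$ for $H$-a.e.\ $f$ makes the inner supremum countable, and essential supremum commutes with a countable supremum (the countable union of the exceptional null sets is still null), so $\sup_{y_0\in K_0}S(y_0)=\esssup_f\sup_{y\in K_0}f(y)/g(f)=S_g$.

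Finally, the displayed chain follows by assembling the pieces: $1=\Qspec1_{g^*}$ (Part 3) $\le\Qspec2_{g^*}(y_0)$ (Part 2); since $g^*$ minimizes $g\mapsto\Qspec2_g(y_0)$ by Corollary \ref{coro:simple-special}, we get $\Qspec2_{g^*}(y_0)\le\Qspec2_{\tilde g}(y_0)\le\Qorig_{\tilde g}$ using Part 4 for $g=\tilde g$; and $\Qorig_{\tilde g}\le\Qorig_{g^*}$ because $\tilde g$ is optimal for the original problem. Beyond Part 4 I expect the only delicate points to be the measurability bookkeeping for the essential suprema and verifying that the computation behind Proposition \ref{prop:simplification} goes through verbatim with the evaluation functional $L=T=f(y_0)$ in place of $\sup_{y\in K}f$.
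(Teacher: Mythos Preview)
Your argument is correct. Parts 1 and 2 match the paper's proof (pathwise comparison of stopping statistics combined with the closed form from Proposition~\ref{prop:simplification}). For Part 3 you take a shorter route than the paper: you read off $\Qspec1_{g^*}=S_{g^*}/c=c/c=1$ directly from the closed form, whereas the paper argues pathwise by first noting that under $g^*$ one has $\sup_{y\in K}Z^{(m)}(y)=c/E_1$ for \emph{every} $m$, so the stopping rule~\eqref{eq:simple-1} is already met at $m=1$; your computation is cleaner, but the paper's version isolates this structural fact about the normalized representation.

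For Part 4 your route differs more substantially. The paper distinguishes the cases $S_g=\infty$ and $S_g<\infty$; in the finite case it picks, for each $\varepsilon>0$, a point $y_0(\varepsilon)\in K_0$ where $\esssup_f f(y_0(\varepsilon))/g(f)$ comes within a factor $1+\varepsilon$ of $S_g$, obtains the corresponding $(1+\varepsilon)$-perturbed lower bound via a direct geometric computation, and then lets $\varepsilon\downarrow 0$. Your approach sidesteps both the case split and the $\varepsilon$-slack: you lower-bound $\Qorig_g$ by the mean stopping time of the auxiliary proxy problem with $L=T=\mathrm{ev}_{y_0}$, whose value is exactly $S(y_0)$, and then take the supremum over $y_0\in K_0$ using the countable interchange. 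The verification you flag---that the derivation of \eqref{eq:\Qproxi-calc} goes through with $\sup_{y\in K}f(y)$ replaced by $f(y_0)$---is indeed routine (max-linearity of $T(h)=h(y_0)$ and the bound $f_k(y_0)/g(f_k)\le S(y_0)$ are all that is used), and the resulting formula $S(y_0)$ remains valid even when $S(y_0)=\infty$, which is why your argument absorbs the infinite case automatically.
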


\begin{proof}
 First, we note that 
 \begin{align*}
  \esssup_{f\in\HH} \frac{\sup_{y \in K} f(y)}{g(f) \sup_{\tilde y \in K} Z^{(m)}(\tilde y)}
    \leq{} & \esssup_{f\in\HH}  \frac{\sup_{y \in K} f(y)}{g(f) Z({y})} \displaybreak[0]\\
 \text{and} \  \esssup_{f\in\HH}  \frac{\sup_{y \in K} f(y)}{g(f) \sup_{\tilde y \in K} Z^{(m)}(\tilde y)}
    \leq{} & \esssup_{f\in\HH}  \frac{\sup_{y \in K} f(y)}{g(f) Z^{(m)}(y_0)}
 \end{align*}
 for any $g$ and any $y_0 \in K$. 
 By \eqref{eq:std-frechet}, we have $ \int_\HH  f(y) H(\dd f) = 1$ for all 
 $y \in K$, and thus, by \eqref{eq:\Qproxi-calc}, 
 $y_0 \mapsto \Qspec2_{g}(y_0)$ is constant on $K$ for any $g$. 
 This proves the first two parts of the proposition.
 
 To see the third assertion, we note that we have
 \begin{equation} \label{eq:sup-y}
  \sup_{y \in K} \frac{f(y)}{g^*(f)}=c \quad \text{for } g^*H\text{-a.e.\ } f \in \HH
 \end{equation}
  and, thus, with \eqref{eq:z-m}, we obtain
 \begin{equation} \label{eq:sup-proc}
  \sup_{y \in K} Z^{(m)}(y) 
  = \sup_{y \in K} \max_{i \in \NN} \frac 1 {\sum_{j=1}^i E_j} \frac{f_i(y)}{g^*(f_i)} = c E_1^{-1} \quad \text{for all } m \in \NN.
 \end{equation}
 Equations \eqref{eq:sup-y} and \eqref{eq:sup-proc} yield
 \begin{align*}
  \Qproxi_{g^*} ={} & \EE \min\bigg\{m\in \NN: \esssup_{f \in \HH} \sup_{y\in K} \frac{f(y)}{g^*(f)}
                              \leq \sup_{y\in K} Z^{(m)}(y) \sum_{j=1}^{m+1} E_j \bigg\}\\
      ={} & \textstyle \EE \min\left\{m\in \NN: c \leq c\cdot\ \sum_{j=1}^{m+1} E_j / E_1 \right\} = 1.
 \end{align*}
 
 For the proof of the fourth part of the proposition, we assume that there 
 exists some countable set $K_0 \subset K$ such that
 $\sup_{y \in K} f(y) = \sup_{y \in K_0} f(y)$ for $H$-a.e.\ $f \in \HH$.
 Hence, we have that 
 \begin{equation} \label{eq:countable}
  \esssup_{f \in \HH} \sup_{y \in K} \frac{f(y)}{g(f)}
 = \sup_{y \in K_0} \esssup_{f \in \HH} \frac{f(y)}{g(f)}.
 \end{equation}
 We first consider the case that
 $\esssup_{f \in \HH} \sup_{y \in K} \frac{f(y)}{g(f)} = \infty$.
 Then, either $c = \int_{\HH} \sup_{y \in K} f(y) H(\dd f) = \infty$, which 
 yields $\infty = \Qspec1_{g^*} \leq \Qorig_g$ (cf.\ Remark 
 \ref{rem:inf-number}), or $c < \infty$. By
 Proposition \ref{prop:c-finite}, the latter implies that
 $\sup_{y \in K} Z(y) < \infty$ with probability one. Thus, by the definition
 of $\Qspec1_g$ in \eqref{eq:simple-1}, $\infty = \Qspec1_g \leq \Qorig_g$.
 The only case that remains is that 
 $\esssup_{f \in \HH} \sup_{y \in K} \frac{f(y)}{g(f)} < \infty$. Then, 
 by \eqref{eq:countable}, for every $\varepsilon > 0$, there exists some
 $y_0(\varepsilon)$ such that
 \begin{align}
  \frac{1}{1+\varepsilon} 
  \esssup_{f \in \HH} \sup_{y \in K} \frac{f(y)}{g(f) Z^{(m)}(y_0(\varepsilon))}
 \leq{}& \esssup_{f \in \HH} \frac{f(y_0(\varepsilon))}{g(f) Z^{(m)}(y_0(\varepsilon))} \nonumber\\
 \leq{}& \esssup_{f \in \HH} \sup_{y \in K} \frac{f(y)}{g(f) Z^{(m)}(y)} \label{eq:eps-assess}
 \end{align}
 with probability one. Further, analogously to the proof of Proposition
 \ref{prop:simplification},
 \begin{align*}
 & \EE \min\bigg\{ m\in \NN: \frac 1 {1+\varepsilon} \esssup_{f\in\HH} 
   \frac{\sup_{y \in K} f(y)}{g(f) Z^{(m)}(y_0)} \le \sum_{j=1}^{m+1} E_j \bigg\} \displaybreak[0]\\
={} & 1 + \sum_{m=1}^\infty 
  \bigg[1 - \EE \bigg(1 \wedge \bigg(\frac{(1+\varepsilon)\cdot f_1(y_0)}{g(f_1)} \Big/ 
                      \esssup_{f \in \HH} \sup_{y\in K} \frac{f(y)}{g(f)}\bigg)\bigg)\bigg]^m \nonumber \displaybreak[0]
  \\={}& \bigg[ \EE \bigg( 1 \wedge \bigg(\frac{(1+\varepsilon)\cdot f_1(y_0)}{g(f_1)} \Big/ 
                           \esssup_{f \in \HH} \sup_{y\in K} \frac{f(y)}{g(f)}\bigg)\bigg)\bigg]^{-1}
  \geq{}  \frac 1 {1+\varepsilon} \Qspec2_g(y_0)  
 \end{align*}
 for all $y_0 \in K$, $\varepsilon > 0$ and all $g$, and thus,
 with \eqref{eq:eps-assess},
 $$\textstyle (1+\varepsilon)^{-1} \inf_{y_0 \in K} \Qspec2_{g}(y_0) \leq \Qorig_g$$
 holds for all $\varepsilon > 0$. Hence, the fourth part of the proposition follows.
\end{proof}

As Proposition \ref{prop:bound-1} shows, the approximation of the optimal 
function value in original problem \eqref{eq:original} by \eqref{eq:simple-1}
and \eqref{eq:simple-2} might be quite vague. In particular, the minimum of
$\Qspec1_{g}$ always equals $1$, that is, some spectral functions which in fact
contribute to $Z$ are not considered. Replacing the processes $Z^{(m)}$ 
occurring in the  construction by the final process $Z^{(\infty)}$ given by
\eqref{eq:z-infty} allows us to take into account all those shape functions
which contribute to $Z^{(\infty)}$. To this end, we revisit the notion of 
\emph{$K$-extremal} and \emph{$K$-subextremal points} introduced by 
\cite{DEM13} and \cite{dombryeyiminko12}.
Henceforth, we suppose that the following assumption holds true which enables
us to consider this problem.
\begin{assumption} \label{as:sep-maxlin}
 Let $\HH$ satisfy the following conditions:
\begin{enumerate}
 \item[(i)] There exists some countable set $K_0 \subset K$ such that, for all
      $h_1, h_2 \in \HH$,
      $$ h_1 < h_2 \text{ on } K 
     \iff \exists \varepsilon > 0: h_1(y) < h_2(y) - \varepsilon 
      \text{ for all } y \in K_0.$$
 \item[(ii)] $\HH$ is a max-linear space, i.e.\ 
       $$t_1 h_1 \vee t_2 h_2 \in \HH$$ for all $h_1, h_2 \in \HH$, $t_1, t_2 \geq 0$.
       Further, $\1 \in \HH$.
 \item[(iii)] $\HH$ is endowed with the $\sigma$-algebra $\H$ of cylinder sets, 
       $\H = \sigma(\{h \in \HH: \ h(y) \in B\}, \ y \in K, \ B \in \B \cap [0,\infty))$.
\end{enumerate}
\end{assumption}

\begin{remark}
\begin{enumerate}
 \item  Assumption \ref{as:sep-maxlin} implies that the additional 
 assumption in the fourth part of Proposition \ref{prop:bound-1} holds, i.e.\
 there exists a countable set $K_0 \subset K$ such that
 $\sup_{y \in K} f(y) = \sup_{y \in K_0} f(y)$ all $f \in \HH$.
 \item As $\H$ is the $\sigma$-algebra of cylinder sets, Assumption
 \ref{as:sep-maxlin} ensures that the process  $F^*$ in the normalized spectral
 representation \eqref{eq:spec-repr} is  uniquely determined (cf.\ Proposition
 \ref{prop:uniqueness}). 
\end{enumerate}
\end{remark}

\begin{definition} \label{def:plus-minus}
 Let $\Phi$ be some Poisson point process on $(0,\infty) \times \HH$
 with intensity measure $u^{-2} \DD u \times \nu(\dd h)$
 for some locally finite measure $\nu$ on $\HH$.
 We call $(t^*,h^*) \in \Phi$ a $K$-extremal point and write $(t^*,h^*) \in
 \Phi_K^{+}$ if and only if
 $$ t^* h^*(y) = \max_{(t,h) \in \Phi} t h(y) \quad \text{ for some } y \in K.$$
 
 Otherwise, i.e.\ if $t^* h^*(y) < \max_{(t,h) \in \Phi} t h(y)$ for all
 $y \in K$, $(t^*,h^*) \in \Phi$ is called a $K$-subextremal point and we write
 $(t^*,h^*) \in \Phi_K^{-}$.
\end{definition}

We generalize a result given in \cite{DEM13} and show that the random
sets $\Phi_K^+$ and $\Phi_K^-$ are point processes on $(0,\infty) \times \HH$,
i.e.\ $\Phi_K^+(C)$ and  $\Phi_K^-(C)$ are random variables for any bounded
set $C \in \B \times \H$. In contrast to \cite{DEM13}, we are interested in
tuples $(t,h)$ instead of the product $th$ and we do not restrict to continuous
functions. Nevertheless, the proof runs analogously.

\begin{proposition} \label{prop:pp}
 $\Phi_K^+$ and $\Phi_K^-$ are point processes on $(0,\infty) \times \HH$. 
\end{proposition}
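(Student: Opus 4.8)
The plan is to verify the defining measurability requirement of Definition \ref{def:plus-minus} directly: for every bounded $C \in \B \times \H$ I show that $\Phi_K^+(C)$ is a random variable, after which the measurability of $\Phi_K^-(C)$ comes for free, since $\Phi$ is a Poisson point process and $\Phi_K^+(C) + \Phi_K^-(C) = \Phi(C)$. As $C$ is bounded, the intensity $u^{-2}\,\dd u \times \nu(\dd h)$ is finite on $C$, so $\Phi(C) < \infty$ almost surely; fixing a measurable enumeration $\Phi = \{(T_i,H_i): i \in \NN\}$ I may write
$$\Phi_K^+(C) = \sum_{i} \1_C(T_i,H_i)\, \1\{(T_i,H_i) \in \Phi_K^+\}.$$
Everything then reduces to showing that, for a tagged point $(t^*,h^*)$, the event that it is $K$-extremal with respect to $\Phi$ is measurable.

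The next step is to rewrite extremality so that it refers only to pointwise evaluations, which are $\H$-measurable by the cylinder $\sigma$-algebra in Assumption \ref{as:sep-maxlin}(iii). Setting $M'(y) = \sup_{(t,h)\in\Phi \setminus \{(t^*,h^*)\}} t\,h(y)$ and noting $\max_{(t,h)\in\Phi} t h(y) = \max\{t^*h^*(y), M'(y)\}$, Definition \ref{def:plus-minus} gives that $(t^*,h^*)$ is $K$-extremal if and only if $t^*h^*(y) \ge M'(y)$ for some $y\in K$, i.e.\ if and only if $t^*h^* < M'$ fails somewhere on $K$. Here Assumption \ref{as:sep-maxlin}(i) is exactly what turns this uncountable condition into a countable one: for two elements of $\HH$, strict domination on all of $K$ is equivalent to a uniform $\varepsilon$-gap domination on the countable set $K_0$, so that
$$\{t^*h^* < h \text{ on } K\} = \Big\{\sup_{y \in K_0}\big(t^*h^*(y) - h(y)\big) < 0\Big\}, \qquad h \in \HH.$$
Since $K_0$ is countable and each map $(t,h)\mapsto t\,h(y)$ is measurable, the right-hand side is a measurable functional of $\Phi$.

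The obstacle is that $M'$ is an infinite maximum and need not lie in $\HH$, so Assumption \ref{as:sep-maxlin}(i) cannot be applied to the pair $(t^*h^*, M')$ directly. I would circumvent this by truncation: put $\Phi_n = \Phi \cap ([1/n,\infty)\times \HH)$, which has finitely many points, and $M'_n(y) = \max_{(t,h)\in \Phi_n \setminus\{(t^*,h^*)\}} t\,h(y)$. By the max-linearity in Assumption \ref{as:sep-maxlin}(ii) one has $M'_n \in \HH$, and $M'_n \uparrow M'$ pointwise. Applying (i) to each pair $(t^*h^*, M'_n)$ shows that $(t^*,h^*)$ is $K$-extremal against the finite process $\Phi_n$ precisely on the measurable event $\{S_n \ge 0\}$, where $S_n := \sup_{y\in K_0}(t^*h^*(y) - M'_n(y))$, and monotonicity gives $\{(t^*,h^*)\in\Phi_K^+\} \subseteq \bigcap_n \{S_n \ge 0\}$.

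The heart of the proof is the reverse inclusion, i.e.\ passing from extremality against every truncation $\Phi_n$ to extremality against the full process $\Phi$. This is a Dini-type statement: from $t^*h^* < M'$ on the compact set $K$ one must deduce $t^*h^* < M'_n$ on all of $K$ for some finite $n$ (equivalently $S_n<0$), and the difficulty is that the witnessing point may drift with $n$. In the sample-continuous setting of \cite{DEM13} this is exactly where Dini's theorem enters; here Assumption \ref{as:sep-maxlin}(i) supplies the compactness substitute that continuity would otherwise provide, and I expect the argument of \cite{DEM13} to carry over with $K_0$ in place of a dense sequence. Once this equality of events is established, $\{(t^*,h^*)\in\Phi_K^+\} = \bigcap_n\{S_n\ge 0\}$ is measurable; hence each summand above is measurable and so is $\Phi_K^+(C)$, whence $\Phi_K^-(C) = \Phi(C) - \Phi_K^+(C)$ is measurable as well, completing the proof that both are point processes.
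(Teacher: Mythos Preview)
Your approach is valid but takes a genuinely different route from the paper's. The paper proves measurability of $\Phi_K^-$ first: it writes the event $\{\Phi_K^-((u_0,\infty)\times C)\ge k\}$ directly as a countable Boolean combination of Poisson \emph{counting} events of the form $\{\Phi(B)\ge j\}$, namely
\[
\bigcup_{\varepsilon\in\QQ_+}\bigcap_{n\in\NN}\bigcup_{\mathbf q\in\QQ_+^n}
\Big\{\Phi\big(\{tf(x_i)>q_i\}\big)\ge 1,\ i\le n;\
\Phi\big(((u_0,\infty)\times C)\cap\{tf(x_j)<q_j-\varepsilon,\ j\le n\}\big)\ge k\Big\},
\]
with $K_0=\{x_1,x_2,\dots\}$, and then recovers $\Phi_K^+=\Phi\setminus\Phi_K^-$. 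No enumeration of atoms and no per-point tagging are used; everything is expressed through the random measure $\Phi(\cdot)$. Your route---enumerating the atoms, tagging $(t^*,h^*)$, truncating to $\Phi_n$, and reducing extremality against $\Phi_n$ to $\{S_n\ge 0\}$ via Assumption~\ref{as:sep-maxlin}(i),(ii)---is a legitimate alternative, and has the virtue of making explicit exactly where a ``Dini step'' (from $t^*h^*<M'$ on $K$ to $t^*h^*<M'_n$ on $K$ for some finite $n$) is required. You leave that step at ``I expect the argument of \cite{DEM13} to carry over'', which is honest but not a proof; note, however, that the paper's asserted set identity encodes the very same passage (its forward inclusion needs $\max_l t^{(l)}h^{(l)}<M-\varepsilon$ on $K_0$ for some fixed $\varepsilon$, i.e.\ Assumption~\ref{as:sep-maxlin}(i) extended to the infinite maximum $M$), and the paper likewise defers the verification to \cite{DEM13}. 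What the paper's formulation buys is economy: no measurable enumeration, only events in the natural $\sigma$-algebra of the point process. What yours buys is transparency about precisely where the compactness substitute in Assumption~\ref{as:sep-maxlin}(i) is doing the work.
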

\begin{proof}
 First, we note that, for $\HH_0 = (0,\infty) \times \HH$, the mapping 
 $\phi: \HH_0 \to \HH, \ (t,h) \mapsto t h(\cdot)$ is measurable.
 Therefore, events of the type 
 $\{\omega \in \Omega: \, \Phi(\{ (t,h) \in \HH_0: t h \in C\}) = k\}$
 are measurable for any $C \in \H$ and $k \in \NN_0$.
 
 Now, let $u_0 > 0$, $C \in \H$ with $\nu(C) < \infty$ and $k \in \NN_0$.
 Furthermore, let $K=\{x_1, x_2, \ldots\}$ be as in Assumption 
 \ref{as:sep-maxlin}. Then, the event
 \begin{align*}
  & \{ \omega \in \Omega: \Phi_K^-((u_0,\infty) \times C) \geq k \}\\
 ={} & \bigcup_{\varepsilon \in \QQ_+} \bigcap_{n \in \NN} 
       \bigcup_{\mathbf{q} \in \QQ_+^n}  \Big\{ \omega \in \Omega: \,
       \Phi\Big(\big\{(t,f) \in \HH_0: \, tf(x_i) > q_i\big\} \Big)
       \geq 1, \ 1 \leq i \leq n, \\
 &  \hspace{0.7cm} 
    \Phi\Big(((u_0,\infty) \times C) \cap \big\{ (t,f) \in \HH_0: 
              \, tf(x_j) < q_j - \varepsilon, \, 1 \leq j \leq
              n\big\}\Big) \geq k \Big\} 
 \end{align*}
 is measurable. 
 Thus, $\Phi_K^-$ and $\Phi_K^+ = \Phi \setminus \Phi_K^-$ are point processes.
\end{proof}

For applying the theory of extremal and subextremal points to the construction
of the process $Z^{(\infty)}$, we define the Poisson point process
$$\Phi = \left\{\left(t,f/g(f)\right): \ (t,f) \in \tilde \Pi\right\}.$$
Then, similarly to the proof of Lemma 3.2 in \cite{dombryeyiminko12}, 
the following result on $\tilde \Pi_K^-$ can be shown.
\begin{lemma} \label{lem:piminus}
Conditional on $Z^{(\infty)}$, the point process $\Phi_K^{-}$
is a Poisson point process on $(0,\infty) \times \HH$ with intensity measure
$$ \tilde \Lambda^{-}(\dd t, \dd h) = 
   t^{-2} \times \int_\HH \int_{T} \mathbf{1}_{f(\cdot) \in \DD h} \cdot
   \mathbf{1}_{t f(\cdot)/g(f) < Z^{(\infty)}(\cdot)}\, g(f) H(\dd f) \, \DD t.$$
\end{lemma}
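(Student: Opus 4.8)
The plan is to follow the argument for Lemma~3.2 in \cite{dombryeyiminko12}, adapting it from the product $th$ to the tuples $(t,h)$ and dropping the continuity restriction. First I would observe that $\Phi$ is itself a Poisson point process: being the image of $\tilde\Pi$ under the measurable map $(t,f)\mapsto(t,f/g(f))$, it has intensity $\Lambda(\dd t,\dd h)=t^{-2}\,\DD t\int_\HH \mathbf{1}_{f/g(f)\in\dd h}\,g(f)\,H(\dd f)$, and $Z^{(\infty)}(y)=\max_{(t,h)\in\Phi}th(y)$. The asserted intensity $\tilde\Lambda^-$ is then exactly the restriction of $\Lambda$ to the (random) subextremal region $R=\{(t,h):\,th(y)<Z^{(\infty)}(y)\ \text{for all}\ y\in K\}$, so the lemma amounts to the statement that, given $Z^{(\infty)}$, the points of $\Phi$ lying in $R$ form a Poisson point process with intensity $\Lambda|_R$. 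By Assumption~\ref{as:sep-maxlin}(i), membership in $R$ --- equivalently $K$-subextremality --- is decided by the values on the countable set $K_0=\{x_1,x_2,\dots\}$; this lets me approximate $\Phi_K^-$ by the $K_n$-subextremal points for the finite sets $K_n=\{x_1,\dots,x_n\}$, which decrease to $\Phi_K^-$ (up to $\Lambda$-null sets) as $n\to\infty$.

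The core is the finite case. Fix $n$ and set $Z_n=(Z^{(\infty)}(x_1),\dots,Z^{(\infty)}(x_n))$, a max-stable vector with absolutely continuous law. For $z\in(0,\infty)^n$ write
\[
 B(z)=\{(t,h):\,th(x_i)\le z_i,\ 1\le i\le n\},\qquad B^\circ(z)=\{(t,h):\,th(x_i)<z_i,\ 1\le i\le n\}.
\]
Since $\{Z_n\le z\}=\{\Phi(B(z)^c)=0\}$ constrains only $B(z)^c$, the independence of $\Phi$ over the disjoint sets $B(z)$ and $B(z)^c$ shows that, conditionally on $\{Z_n\le z\}$, the restriction $\Phi\cap B(z)$ is still Poisson with intensity $\Lambda|_{B(z)}$. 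I would then pass from conditioning on $\{Z_n\le z\}$ to conditioning on the exact value $\{Z_n=z\}$ by differentiating in $z$, summing over the almost surely finitely many \emph{hitting scenarios} that record which point of $\Phi$ realises each coordinatewise maximum. Because $\Lambda$ is non-atomic in $t$, the boundary $B(z)\setminus B^\circ(z)$ is $\Lambda$-null, so the extremal points carry no $\Lambda$-mass and, conditionally on $Z_n=z$, the remaining (subextremal) points form a Poisson point process with intensity $\Lambda|_{B^\circ(z)}$, i.e.\ $\Lambda$ restricted to $\{th(x_i)<z_i,\ 1\le i\le n\}$.

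Finally I would let $n\to\infty$. The regions $B^\circ(Z_n)$ decrease to $\{(t,h):\,th(x_i)<Z^{(\infty)}(x_i)\ \text{for all}\ i\}$, which coincides $\Lambda$-a.e.\ with $R$ by Assumption~\ref{as:sep-maxlin}(i); conditioning on the whole vector $(Z^{(\infty)}(x_i))_{i\in\NN}$ is the same as conditioning on $Z^{(\infty)}$, and the corresponding conditional Poisson processes converge to one with intensity $\tilde\Lambda^-=\Lambda|_R$, as claimed.

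I expect the step from $\{Z_n\le z\}$ to $\{Z_n=z\}$ to be the main obstacle: conditioning on the exact value of a coordinatewise maximum requires a genuine disintegration/differentiation against the density of $Z_n$ and careful bookkeeping of the extremal points, together with the verification that the boundary contributes no mass so that extremal and subextremal points truly separate. A secondary difficulty is the passage $n\to\infty$, where monotone control is needed to preserve the conditional Poisson structure and to upgrade $K_0$-subextremality to $K$-subextremality via Assumption~\ref{as:sep-maxlin}(i).
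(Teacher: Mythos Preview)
Your proposal is correct and follows precisely the route the paper indicates: the authors give no proof of their own but simply state that the lemma ``can be shown similarly to the proof of Lemma~3.2 in \cite{dombryeyiminko12}'', and your outline is a faithful sketch of that argument, with the two adaptations (tuples $(t,h)$ instead of products $th$; replacing continuity by the separability Assumption~\ref{as:sep-maxlin}(i)) made explicit. There is nothing further to compare.
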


In the following, we will mainly focus on the first component of the
point process $\Phi$, i.e.\ we consider the point processes
\begin{align*}
 \Pi_{0,K}^{+} ={} & \{ t: \ (t, f/g(f)) \in \Phi_K^{+}\}\\
 \text{and} \quad \Pi_{0,K}^{-} ={} & \{ t: \ (t, f/g(f)) \in \Phi_K^{-}\},
\end{align*}
respectively. Obviously, any $t^* \in \Pi_{0,K}^{+}$, is taken into account by the 
definition of $\Qorig_g$ in \eqref{eq:original}. Thus, we can rewrite
\eqref{eq:original} as
\begin{equation} \label{eq:original-ref}
\Qorig_{g} = \EE |\Pi_{0,K}^{+}| + \EE\bigg( \bigg| \bigg\{t \in \tilde \Pi_{0,K}^{-}: 
                   \ \esssup_{f \in \HH} \sup_{y \in K} 
                     \frac{f(y)}{g(f) Z^{(\infty)}(y)} > \frac 1 t \bigg\} \bigg| \bigg).
\end{equation}
Including all the points of $\Pi_{0,K}^{+}$, i.e.\ all the spectral functions that
finally contribute to $\tilde Z^{(\infty)}$, and replacing $Z^{(m)}$ by 
$Z^{(\infty)}$, we analogously obtain refined versions of \eqref{eq:simple},
\eqref{eq:simple-1} and \eqref{eq:simple-2} as
\begin{align}
\tildeQproxi_{g} ={} & \EE |\Pi_{0,K}^{+}| + \EE\bigg( \bigg| \bigg\{t \in \Pi_{0,K}^{-}: \,
                   \esssup_{f \in \HH} \frac{\sup_{y \in K} f(y)}{g(f) T(Z^{(\infty)})} > \frac 1 t
                   \bigg\} \bigg| \bigg) \label{eq:refined},\displaybreak[0]\\
\tildeQspec1_{g}
={} & \EE |\Pi_{0,K}^{+}| + \EE\bigg( \bigg| \bigg\{t \in \Pi_{0,K}^{-}: \,
               \esssup_{f \in \HH} \frac{\sup_{y \in K} f(y)}{g(f) \sup_{\tilde y \in K} Z^{(\infty)}(\tilde y)} > \frac 1 t
           \bigg\} \bigg| \bigg), \label{eq:refined-1}\displaybreak[0]\\ 
\tildeQspec2_{g}(y_0&) ={} 
 \EE |\Pi_{0,K}^{+}| + \EE\bigg( \bigg| \bigg\{t \in \Pi_{0,K}^{-}: \, \esssup_{f \in \HH} 
                     \frac{\sup_{y \in K} f(y)}{g(f) Z^{(\infty)}(y_0)} > \frac 1 t\bigg\} \bigg| \bigg). \label{eq:refined-2}
\end{align}

By definition, obviously $\tildeQspec1_{g} \geq \Qspec1_{g}$ and 
$\tildeQspec2_{g} \geq \Qspec2_{g}$ for all $g$. Hence, $\tildeQspec1_g$ and
$\tildeQspec2_g$ are improved lower bounds of $\Qorig_g$ (see Proposition
\ref{prop:bound-2} below). The optimization of these bounds is facilitated by
the following result on $\Pi_{0,K}^+$.

\begin{lemma} \label{lem:piplus}
 We have $$\EE |\Pi_{0,K}^{+}| 
 = \EE_{ Z}\left(\int_{\HH} \sup_{y \in K} \frac{f(y)}{Z(y)} \, H(\dd f) \right)$$
 which does not depend on the choice of $g$.
\end{lemma}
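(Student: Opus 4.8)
The plan is to evaluate the expected number of $K$-extremal points by means of the Slivnyak--Mecke formula (Palm calculus) for the Poisson point process $\tilde\Pi$. I would first express
\[
 |\Pi_{0,K}^{+}| = \sum_{(t,f) \in \tilde\Pi} \1\left[\left(t, f/g(f)\right) \in \Phi_K^{+}\right],
\]
where, by Definition~\ref{def:plus-minus} together with the measurability established in Proposition~\ref{prop:pp}, the summand is a jointly measurable function of the point $(t,f)$ and the configuration $\tilde\Pi$. By the very definition of $\Phi_K^{+}$, the point $(t,f/g(f))$ is $K$-extremal exactly when $t f(y)/g(f) = Z^{(\infty)}(y)$ for some $y \in K$, where $Z^{(\infty)}(y) = \max_{(t',f') \in \tilde\Pi} t' f'(y)/g(f')$.

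The key step is the Mecke equation: adding a deterministic point $(t, f/g(f))$ to an independent realization of $\tilde\Pi$, the added point is $K$-extremal in the enlarged configuration if and only if $t f(y)/g(f) \ge Z^{(\infty)}(y)$ for some $y \in K$, with $Z^{(\infty)}$ now built from the original process alone. Equivalently, this condition reads $t \ge g(f) \big/ \sup_{y \in K}(f(y)/Z^{(\infty)}(y))$. Hence
\[
 \EE|\Pi_{0,K}^{+}|
 = \int_{\HH} \int_0^\infty \EE\left[\1\left(t \ge \frac{g(f)}{\sup_{y \in K} f(y)/Z^{(\infty)}(y)}\right)\right] t^{-2}\,\DD t\; g(f)\, H(\dd f).
\]

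I would then integrate out $t$ against the intensity $t^{-2}\,\DD t$. For a fixed $f$ and a fixed realization of the process (hence fixed $Z^{(\infty)}$), Tonelli's theorem gives the inner integral as $g(f)^{-1} \sup_{y \in K} f(y)/Z^{(\infty)}(y)$, and the crucial observation is that this factor $g(f)^{-1}$ cancels against the intensity weight $g(f)$. This cancellation is exactly what renders the final quantity independent of $g$. What survives is
\[
 \EE|\Pi_{0,K}^{+}|
 = \int_{\HH} \EE\left[\sup_{y \in K} \frac{f(y)}{Z^{(\infty)}(y)}\right] H(\dd f)
 = \EE_Z\left(\int_{\HH} \sup_{y \in K} \frac{f(y)}{Z(y)}\, H(\dd f)\right),
\]
where the last equality uses Tonelli once more to interchange the $H$-integral with the expectation (all integrands being non-negative) together with $Z^{(\infty)} =^d Z$, which holds for every admissible $g$ by Proposition~\ref{prop:1}.

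The main obstacle I anticipate is the careful handling of ties and the measurability required to invoke the Mecke formula, i.e.\ that the event $\{(t, f/g(f)) \in \Phi_K^{+}\}$ is jointly measurable in the point and the configuration; this is precisely what Proposition~\ref{prop:pp} provides. The boundary case $t f(y)/g(f) = Z^{(\infty)}(y)$ of an exact tie occurs with probability zero and therefore does not affect the value of the integral, which legitimizes passing between the strict and non-strict versions of the extremality condition. Once measurability is secured, the remaining manipulations are applications of Tonelli's theorem and are routine.
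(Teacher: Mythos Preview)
Your argument via the Slivnyak--Mecke formula is correct and yields the result cleanly. The paper, however, proceeds differently: it writes $\EE|\Pi_{0,K}^{+} \cap [t_0,\infty)| = \EE|\tilde\Pi \cap ([t_0,\infty)\times\HH)| - \EE|\Pi_{0,K}^{-} \cap [t_0,\infty)|$, evaluates the subextremal term through Lemma~\ref{lem:piminus} (the conditional Poisson structure of $\Phi_K^{-}$ given $Z^{(\infty)}$), and then lets $t_0\downarrow 0$ by monotone convergence. So the paper reaches the extremal count by complementation and reuses the Dombry--Eyi-Minko machinery already invoked for Lemma~\ref{lem:piminus}, whereas you compute $\EE|\Pi_{0,K}^{+}|$ directly via Palm calculus. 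Your route is more self-contained (no truncation, no appeal to Lemma~\ref{lem:piminus}), while the paper's route has the advantage of staying entirely within the conditional-intensity framework that is needed anyway for the proof of Proposition~\ref{prop:refinement}. One small point worth making explicit in your write-up: after the cancellation $g(f)^{-1}\cdot g(f)$, the $H$-integral runs only over $\{g(f)>0\}$, and it is condition~\eqref{eq:g-regularity} that lets you extend it back to all of $\HH$ without changing its value.
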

\begin{proof}
Let $B  = [t_0,\infty)$ with $t_0 >0$.
Then, we have
\begin{align*}
   \EE \left|\Pi_{0,K}^{+} \cap B \right|
={}& \EE\left| \tilde \Pi \cap (B \times \HH) \right| - \EE \left|\Pi_{0,K}^{-} \cap B\right|.
\end{align*}
 Conditioning on $Z^{(\infty)}$, Lemma \ref{lem:piminus} yields 
\begin{align*}
     \EE \left|\Pi_{0,K}^{+} \cap B\right|
={} & \int_\HH \int_0^\infty t^{-2} \mathbf{1}_{t \geq t_0} \DD t \, g(f) H(\dd f) \displaybreak[0]\\
     - \EE_{Z^{(\infty)}} & \left(\int_\HH \int_0^\infty t^{-2} \mathbf{1}_{t \geq t_0} 
        \mathbf{1}_{\frac 1 t > \sup_{y \in K} \frac{f(y)}{g(f) Z^{(\infty)}(y)}} \DD t \, g(f) H(\dd f)\right)\displaybreak[0]\\
={} & \EE_{Z} \left(\int_\HH \int_0^\infty t^{-2} \mathbf{1}_{t \geq t_0} 
        \mathbf{1}_{\frac 1 t \leq \sup_{y \in K} \frac{f(y)}{g(f) Z(y)}} \DD t \, g(f) H(\dd f)\right).    
\end{align*}
Considering a monotone sequence $t_{0,n} \searrow 0$ as $n \to \infty$, the
monotone convergence theorem yields
\begin{align*}
 \EE|\Pi_{0,K}^+| ={} & \EE_{Z} \left( \int_\HH \int_0^\infty t^{-2} \mathbf{1}_{t > 1/\sup_{y \in K} \frac{f(y)}{g(f) Z(y)}}
   \DD t \, g(f) H(\dd f) \right)\\
 ={} & \EE_{Z}\left(\int_{\HH} \sup_{y \in K} \frac{f(y)}{Z(y)} \, H(\dd f) \right),
\end{align*}
  which completes the proof.
\end{proof}

The results stated in Lemma \ref{lem:piminus} and \ref{lem:piplus}
facilitate the calculation of $\Qorig_{g}$ and allow us to relate the
minimizer of \eqref{eq:refined} to the solution of our previously
modified optimization problem, $g^* \in \Qproxi$.

\begin{proposition} \label{prop:refinement}
 \begin{enumerate}
 \item For any $g$, we have
   \begin{equation} \label{eq:Qorig-form}
    \Qorig_{g} = \EE_{Z} \bigg(\esssup_{f \in \HH} \sup_{y \in K} \frac{f(y)}{g(f) Z(y)} \bigg).
   \end{equation}
 \item Assume that $f \mapsto \sup_{y \in K} f(y)$ is measurable. Then, with $\tildeQproxi_{g}$ as in 
   \eqref{eq:refined}, for any max-linear function $T$, it holds
   $$\arg\min_{g} \tildeQproxi_{g} 
    \supset \arg\min_{g} \esssup_{f \in \HH} \frac{\sup_{y \in K} f(y)}{g(f)} = \Qproxi,$$
   where $\Qproxi = \arg\min_{g} \Qproxi_{g}$.
\end{enumerate}
\end{proposition}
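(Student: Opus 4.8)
The plan is to evaluate both quantities through the extremal/subextremal decomposition already recorded in \eqref{eq:original-ref}, handling the extremal count $\EE|\Pi_{0,K}^{+}|$ with Lemma \ref{lem:piplus} and the subextremal count with Lemma \ref{lem:piminus}. The common device is the following: conditional on $Z^{(\infty)}$, the first component $\Pi_{0,K}^{-}$ is Poisson with intensity $t^{-2}\int_{\HH}\1\{t<1/s_f\}\,g(f)\,H(\dd f)\,\DD t$, where I abbreviate $s_f:=\sup_{y\in K} f(y)/(g(f)Z^{(\infty)}(y))$ for the reciprocal of the largest $t$ at which $(t,f/g(f))$ remains subextremal. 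Integrating $\int_\theta^\infty t^{-2}\,\DD t$ against this indicator shows that the expected number of subextremal points exceeding any threshold $\theta>0$ equals $\int_{\HH}(\theta^{-1}-s_f)_+\,g(f)\,H(\dd f)$. The identity that makes everything collapse is that $s_f\,g(f)=\sup_{y\in K} f(y)/Z^{(\infty)}(y)$ does \emph{not} depend on $g$.

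For the first assertion I would read off from \eqref{eq:original-ref} that the threshold is $\theta=1/R$ with $R:=\esssup_{f\in\HH}\sup_{y\in K} f(y)/(g(f)Z^{(\infty)}(y))$, so $\theta^{-1}=R$. Since $s_f\le R$ for $H$-a.e.\ $f$ the positive part is inactive, and the conditional subextremal count is $\int_{\HH}(R-s_f)\,g(f)\,H(\dd f)=R-\int_{\HH}\sup_{y\in K}f(y)/Z^{(\infty)}(y)\,H(\dd f)$, using $\int g\,\dd H=1$ and the $g$-free identity. Lemma \ref{lem:piplus} supplies $\EE|\Pi_{0,K}^{+}|=\EE_Z(\int_{\HH}\sup_{y\in K}f(y)/Z(y)\,H(\dd f))$, and since $Z=^d Z^{(\infty)}$ the two integral terms cancel after applying $\EE_{Z^{(\infty)}}$, leaving $\Qorig_g=\EE_{Z^{(\infty)}}(R)=\EE_Z(\esssup_{f\in\HH}\sup_{y\in K}f(y)/(g(f)Z(y)))$, which is \eqref{eq:Qorig-form}.

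For the second assertion the summand $\EE|\Pi_{0,K}^{+}|$ is $g$-independent by Lemma \ref{lem:piplus}, so minimizing $\tildeQproxi_g$ reduces to minimizing the subextremal term $B_T(g)$. The threshold read off from \eqref{eq:refined} is $\theta=T(Z^{(\infty)})/S(g)$ with $S(g):=\esssup_{f\in\HH}\sup_{y\in K}f(y)/g(f)$, because the factor $T(Z^{(\infty)})$ pulls out of the essential supremum over $f$. The same computation gives $B_T(g)=\EE_{Z^{(\infty)}}\int_{\HH}(S(g)/T(Z^{(\infty)})-s_f)_+\,g(f)\,H(\dd f)$; multiplying the positive part through by $g(f)>0$ and invoking the $g$-free identity turns the integrand into $(S(g)g(f)/T(Z^{(\infty)})-\sup_{y\in K}f(y)/Z^{(\infty)}(y))_+$, so that the $g$-dependence survives only through the product $S(g)g(f)$.

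The crux is then a pointwise monotonicity argument. By the definition of $S(g)$ one has $S(g)\,g(f)\ge\sup_{y\in K}f(y)$ for $H$-a.e.\ $f$, with equality when $g=g^*$, where $g^*(f)=c^{-1}\sup_{y\in K}f(y)$ and $S(g^*)=c$. As $x\mapsto(x-a)_+$ is non-decreasing, the integrand at a general $g$ dominates the integrand at $g^*$ pointwise, for every realization of $Z^{(\infty)}$ and every max-linear $T$; integrating and taking expectations yields $B_T(g)\ge B_T(g^*)$, hence $g^*\in\arg\min_g\tildeQproxi_g$. Finally, any $g\in\Qproxi$ agrees with $g^*$ $H$-a.e.\ by Theorem \ref{thm:argmin-element}, so $S(g)=S(g^*)$ and $g=g^*$ inside all integrals, giving $B_T(g)=B_T(g^*)$ and therefore $\Qproxi\subset\arg\min_g\tildeQproxi_g$. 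The main obstacle I anticipate is not any single inequality but the bookkeeping around the positive part: one must verify that after the $g$-free identity the $g$-dependence enters only through $S(g)g(f)$, and separately dispose of the degenerate case $S(g)=\infty$ (where $B_T(g)=\infty$ and the inequality is trivial) so that the argmin is genuinely attained in the class where the monotonicity bites.
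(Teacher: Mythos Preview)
Your proposal is correct and follows essentially the same route as the paper: decompose via \eqref{eq:original-ref}, handle the extremal count with Lemma \ref{lem:piplus}, compute the subextremal count by conditioning on $Z^{(\infty)}$ via Lemma \ref{lem:piminus}, and for part 2 exploit the pointwise inequality $S(g)g(f)\ge\sup_{y\in K}f(y)$ (with equality at $g^*$) inside the positive part. Your explicit naming of the quantities $s_f$, $R$, $S(g)$, $B_T(g)$ and your separate treatment of the case $S(g)=\infty$ are cosmetic refinements of the same argument the paper gives.
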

\begin{proof}
Let $Z^{(\infty)}$ be given by \eqref{eq:z-infty}.
By Lemma \ref{lem:piplus}, we obtain
 \begin{align*}
 \Qorig_{g} ={} & \EE_{Z} \ \int_\HH \sup_{y \in K} \frac{f(y)}{Z(y)} H(\dd f)\displaybreak[0]\\
     & + \EE\bigg( \bigg| \bigg\{t \in  \Pi_{0,K}^{-}: \ \esssup_{f \in \HH} \sup_{y \in K} 
                     \frac{f(y)}{g(f) Z^{(\infty)}(y)} > t^{-1} \bigg\} \bigg| \bigg).  
 \end{align*}
 Conditioning on $Z^{(\infty)}$, Lemma \ref{lem:piminus} yields                    
 \begin{align}
  & \EE\bigg( \bigg| \bigg\{t \in \Pi_{0,K}^{-}: 
                   \ \esssup_{f \in \HH} \sup_{y \in K} \frac{f(y)}{g(f) Z^{(\infty)}(y)} > t^{-1} \bigg\} \bigg| \bigg) 
    \nonumber \displaybreak[0]\\
  ={} & \EE_{Z} \bigg(\int_{\HH} \int_0^\infty t^{-2} \mathbf{1}_{t>1/{\esssup\limits_{h \in \HH}} \sup\limits_{y \in K} 
     \frac{h(y)}{g(h) Z(y)}}
     \mathbf{1}_{t<1/\sup\limits_{y \in K} \frac{f(y)}{g(f) Z(y)}}  \DD t \, g(f) H(\dd f)\bigg) 
     \nonumber \displaybreak[0]\\
   ={} & \EE_{Z} \left( \int_{\HH}  \bigg\{\esssup_{h \in \HH} \sup_{y \in K} \frac{h(y) g(f)}{g(h)Z(y)}
             - \sup_{y \in K} \frac{f(y)}{Z(y)} \bigg\}_{+} \, H(\dd f) \right)
        \nonumber \displaybreak[0]\\
   ={} & \EE_{Z} \bigg[\esssup_{h \in \HH} \sup_{y \in K} \frac{h(y)}{g(h) Z(y)} \bigg]
      - \EE_{Z} \int_\HH \sup_{y \in K} \frac{f(y)}{Z(y)} \,H(\dd f).
   \nonumber
 \end{align}
 In the last step we used the fact that 
 $$ \esssup_{h \in \HH} \sup_{y \in K} \frac{h(y)g(f)}{g(h) Z(y)} - \sup_{y \in K} \frac{f(y)}{Z(y)} \geq 0$$
 for $H$-a.e.\ $f \in \HH$. The first assertion follows.
 
 Analogously to the first part, we get that
 \begin{align*}
 \tildeQproxi_{g} ={} & \EE_{Z} \int_\HH \sup_{y \in K} \frac{f(y)}{\tilde Z(y)} H(\dd f)\\
    & + \EE\bigg( \bigg| \bigg\{t \in \Pi_{0,K}^{-}: 
                   \ \esssup_{f \in \HH} \frac{\sup_{y \in K} f(y)}{g(f) T(Z^{(\infty)})} > t^{-1} \bigg\} \bigg| \bigg)
 \end{align*}
 and 
 \begin{align}
  & \EE\bigg( \bigg| \bigg\{t \in \Pi_{0,K}^{-}: 
                   \, \esssup_{f \in \HH} \frac{\sup_{y \in K} f(y)}{g(f) T(Z^{(\infty)})} > t^{-1} \bigg\} \bigg| \bigg) \nonumber \\
  ={} & \EE_{Z} \bigg(\int_{\HH} \int_0^\infty t^{-2} 
     \mathbf{1}_{t>1/\esssup\limits_{h \in \HH} \sup\limits_{y \in K} \frac{h(y)}{g(h) T(Z)}}
     \mathbf{1}_{t<1/\sup\limits_{y \in K} \frac{f(y)}{g(f) \tilde Z(y)}} \DD t \, g(f)  H(\dd f)\bigg) \nonumber \displaybreak[0]\\
  ={} & \EE_{Z} \left( \int_{\HH} \bigg\{\esssup_{h \in \HH} \frac{\sup_{y \in K} h(y)}{g(h) T(Z)}
         g(f)  - \sup_{y \in K} \frac{f(y)}{Z(y)} \bigg\}_+ H(\dd f) \right) \nonumber \displaybreak[0]\\
  \geq{} & \EE_{Z} \left( \int_{\HH} \bigg\{ \frac{\sup_{y \in K} f(y)}{ T(Z)}
         - \sup_{y \in K} \frac{f(y)}{Z(y)} \bigg\}_+ H(\dd f) \right) \label{eq:refined-calc}.
 \end{align}
 Now, let $g \in \Qproxi=\arg\min_{g} \esssup_{f \in \HH} (\sup_{y \in K} f(y) / g(f))$.
 Then, by Theorem \ref{thm:argmin-element}, we have that 
 $g(f) = c^{-1} \sup_{y \in K} f(y)$ for $H$-almost all $f \in \HH$. Thus,
 we get equality in Equation \eqref{eq:refined-calc} and hence 
 $\Qproxi \subset \arg\min_{g} \tildeQproxi_{g}$.
\end{proof}

Proposition \ref{prop:refinement} leads to two implications in application.
Firstly, it allows for the numerical calculation of 
$\Qorig_{g^*}$ for any given max-stable process $Z$. With
$\frac{f(y)}{g^*(f)}  \leq c$, we get the assessment
\begin{equation} \label{eq:q-assess}
 \Qorig_{g^*} \leq{} c \cdot \EE\Big[\Big(\inf_{y \in K} Z(y)\Big)^{-1}\Big].
\end{equation}
Under the assumption that condition \eqref{eq:infz-finite} holds (see Remark 
\ref{rem:infz-finite} for a sufficient condition), this yields that
$\Qorig_{g^*} < \infty$ if $c<\infty$. In other words, the expectation of the
stochastic number $m$ from  \eqref{eq:stopping-trafo} is then finite for the
normalized representation. 

We further evaluate when $\Qorig_{g^*}$ reaches its upper bound as in 
\eqref{eq:q-assess}. Note that equality in \eqref{eq:q-assess} holds if and 
only if 
\begin{equation} \label{eq:cond-sharp}
\esssup_{f \in \HH} \sup_{y \in K} \frac{f(y)}{g^*(f) Z(y)} = \frac{c}{\inf_{\tilde y \in K} Z(\tilde y)} \quad a.s.
\end{equation}
Furthermore, by Assumption \ref{as:sep-maxlin}, $\sup_{y \in K}$ in 
\eqref{eq:cond-sharp} may be replaced by $\sup_{y_0 \in K}$ for some countable
set $K_0$. The fact that $K_0$ is countable allows for interchanging 
$\esssup_{f \in \HH}$ and $\sup_{y \in K_0}$, i.e.\ Equation \eqref{eq:cond-sharp}
is equivalent to
\begin{equation} \label{eq:cond-sharp-2}
\sup_{y \in K_0} \esssup_{f \in \HH} \frac{f(y)}{g^*(f) Z(y)} = \frac{c}{\inf_{\tilde y \in K} Z(\tilde y)} \quad a.s.
\end{equation}
Thus, condition \eqref{eq:cond-sharp} holds if and only if, with probability one, there exists a sequence $(y_n)_{n \in \NN}$ in $K_0$ such that
\begin{equation} \label{eq:same-limit}
\lim_{n \to \infty} \esssup_{f \in \HH} \frac{f(y_n)}{\sup_{\tilde y \in K} f(\tilde y)}  = \lim_{n \to \infty} \frac{Z(y_n)}{\inf_{\tilde y \in K} Z(\tilde y)}.
\end{equation}
Note that the left-hand side of \eqref{eq:same-limit} is bounded from above by $1$, while the right-side is bounded
from below by $1$. Condition \eqref{eq:same-limit} can be reformulated in the following way:
For every $\varepsilon > 0$ and almost every sample path of $Z$, there exists some $y \in K$ with
\begin{align} 
 & Z(y) < (1+\varepsilon) \inf_{\tilde y \in K} Z(\tilde y) \label{eq:cond-sharp-new1}\\
 \text{and} \quad & H\Big(\Big\{f \in \HH: \, f(y) > (1-\varepsilon) \sup_{\tilde y \in K} f(\tilde y)\Big\}\Big) > 0.\label{eq:cond-sharp-new2}
\end{align}

Analogously to Equation \eqref{eq:q-assess}, where $\EE m$ is bounded from 
above, the number $m$ can be bounded from above a.s.\ by
\begin{equation} \label{eq:m-assess}
 \min \left\{\tilde m \in \NN: \ c  / \inf_{y \in K} Z(y) \leq \textstyle \sum_{j=1}^{\tilde m +1} E_j\right\}
\end{equation}
and, again, $m$ equals \eqref{eq:m-assess} a.s.\ if and only if \eqref{eq:cond-sharp-new1} and
\eqref{eq:cond-sharp-new2} hold.

\begin{remark}
 If $Z$ is represented by a stochastic process, i.e. $Z$ is defined as in
 \eqref{eq:incremental} \citep[cf.][for example]{penrose92}, $H$ is a 
 probability measure, namely the law of $W$. In this case, condition \eqref{eq:cond-sharp-new2}
 is equivalent to
 \begin{equation} \label{eq:cond-sharp-incr}
  \PP\left( W(y) > (1-\varepsilon)\textstyle\sup_{\tilde y \in K} W(\tilde y)\right) > 0.
\end{equation}
\end{remark}
\medskip

Secondly, Proposition \ref{prop:refinement} implies that minimizing 
$\tildeQproxi_g$ can be achieved by any $g^* \in \Qproxi$.
We thus obtain the following corollary.

\begin{corollary} \label{coro:refine-special}
 Let the mapping $f \mapsto \sup_{y \in K} f(y)$ be measurable.
 Then, the optimization problem given in \eqref{eq:refined} is solvable
 if and only if the optimization problem \eqref{eq:simple} is solvable
 (cf.\ Corollary \ref{coro:simple-special}).
 In this case, we have
 $$ \Qproxi \subset \arg\min_{g} \tildeQspec1_{g} = \arg\min_{g} \tildeQspec2_{g}(y_0).$$
 In particular, the normalized spectral representation is optimal w.r.t.\ \eqref{eq:refined}.
\end{corollary}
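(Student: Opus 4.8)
The plan is to run everything off the closed-form expression for $\tildeQproxi_g$ that emerges, for an arbitrary max-linear $T$, inside the proof of Proposition~\ref{prop:refinement}:
\[
 \tildeQproxi_g = \EE\,\bigl|\Pi_{0,K}^{+}\bigr| + \EE_Z \int_\HH \Bigl\{ \frac{g(f)}{T(Z)}\,\esssup_{h \in \HH} \frac{\sup_{y \in K} h(y)}{g(h)} - \sup_{y \in K} \frac{f(y)}{Z(y)} \Bigr\}_{+} H(\dd f),
\]
whose first summand is the $g$-free quantity of Lemma~\ref{lem:piplus} and in which $\tildeQspec1_g$ and $\tildeQspec2_g(y_0)$ are the instances $T=\sup_{y\in K}$ and $T=\mathrm{ev}_{y_0}$ (evaluation at $y_0$). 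Writing $S_g:=\esssup_{h}\sup_{y}h(y)/g(h)$, the bracket is nondecreasing in the product $S_g\,g(f)$, and Theorem~\ref{thm:argmin-element} gives $S_g\,g(f)\ge\sup_y f(y)$ for $H$-a.e.\ $f$, with equality exactly for $g=g^*$. This monotonicity is the single lever for all parts of the statement.

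For the solvability dichotomy I would split on $c$. Because $T(Z)\le\sup_{y}Z(y)$ by \eqref{eq:F-bounds}, the bracket for a general $T$ dominates the one for $T=\sup$, so $\tildeQproxi_g\ge\tildeQspec1_g\ge\Qspec1_g$ for every $g$. If $c=\infty$ then $\Qspec1_g=\infty$ by Remark~\ref{rem:inf-number}, hence $\tildeQproxi_g\equiv\infty$ and \eqref{eq:refined} is unsolvable, exactly as \eqref{eq:simple} is. If $c<\infty$ then $\Qproxi=\{g^*\}$ is nonempty, Proposition~\ref{prop:refinement}(2) shows $g^*$ attains the infimum, and evaluating the display at $g^*$ (where the product equals $\sup_y f(y)$) together with $T(Z)\ge\inf_y Z(y)$ bounds the minimum by a multiple of $\EE[(\inf_{y\in K}Z(y))^{-1}]$, finite by \eqref{eq:q-assess}. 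Thus \eqref{eq:refined} is solvable precisely when $c<\infty$, i.e.\ precisely when \eqref{eq:simple} is (Corollary~\ref{coro:simple-special}). The one hypothesis I would keep in view is the integrability \eqref{eq:infz-finite} underlying \eqref{eq:q-assess}, which Remark~\ref{rem:infz-finite} supplies.

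For the minimizers, the monotonicity gives $\tildeQproxi_g\ge\tildeQproxi_{g^*}$ for all $g$, so $g^*\in\arg\min_g\tildeQproxi_g$; specializing $T$ yields $\Qproxi\subset\arg\min_g\tildeQspec1_g$ and $\Qproxi\subset\arg\min_g\tildeQspec2_g(y_0)$, which already delivers the ``in particular'' claim that the normalized spectral representation is optimal for \eqref{eq:refined}. To pin down the whole argmin I would examine the equality case of the bound: tracking, for $H$-a.e.\ $f$ and almost every realisation of $Z$, when the bracket at $g$ coincides with the bracket at $g^*$ should show that $g$ minimizes iff $S_g\,g(f)\le\gamma_f^{(T)}$ for $H$-a.e.\ $f$, where $\gamma_f^{(T)}$ denotes the essential infimum of $T(Z)\sup_{y}f(y)/Z(y)$ over the law of $Z$.

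The comparison of these two thresholds is the step I expect to be the main obstacle. From $\sup_{\tilde y}Z(\tilde y)\ge Z(y_0)$ one gets $\gamma_f^{(\sup)}\ge\gamma_f^{(\mathrm{ev})}$ and hence the inclusion $\arg\min_g\tildeQspec1_g\supseteq\arg\min_g\tildeQspec2_g(y_0)$ for free; the real work is the reverse inclusion, equivalently $\gamma_f^{(\sup)}=\gamma_f^{(\mathrm{ev}_{y_0})}$ for $H$-a.e.\ $f$, which says that the ``slack'' $\gamma_f^{(T)}-\sup_y f(y)$ tolerated by the positive part is the same for the two functionals. Here I would use Assumption~\ref{as:sep-maxlin} to reduce $\sup_{y\in K}$ to a supremum over the countable set $K_0$ and to interchange it with the essential infimum, and I would bring in the constancy of $y_0\mapsto\Qspec2(y_0)$ from Proposition~\ref{prop:bound-1}(1); the cleanest target is that both argmin sets collapse to $\Qproxi=\{g^*\}$, so that the asserted equality reduces to two instances of the same uniqueness statement. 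Disposing of the spurious minimizers that the flatness of $x\mapsto\{x-s\}_{+}$ would otherwise create is the delicate point that I anticipate will absorb most of the effort.
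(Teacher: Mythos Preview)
Your derivation of the solvability dichotomy and of the inclusion $\Qproxi\subset\arg\min_g\tildeQproxi_g$ (for any max-linear $T$) is correct and is exactly what the paper has in mind: the corollary carries no separate proof in the paper and is meant to follow immediately from Proposition~\ref{prop:refinement}(2), Corollary~\ref{coro:simple-special}, and the finiteness bound~\eqref{eq:q-assess}. In particular, the ``in particular'' clause and the two containments $\Qproxi\subset\arg\min_g\tildeQspec1_g$ and $\Qproxi\subset\arg\min_g\tildeQspec2_g(y_0)$ are already settled by your first two paragraphs.

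Where you diverge from the paper is in the treatment of the equality $\arg\min_g\tildeQspec1_g=\arg\min_g\tildeQspec2_g(y_0)$. The paper does \emph{not} prove this: Proposition~\ref{prop:refinement}(2) only delivers the common inclusion of $\Qproxi$, and no further argument is supplied. The equality is asserted by analogy with Corollary~\ref{coro:simple-special}, where it genuinely follows because Proposition~\ref{prop:simplification} shows $\Qproxi_g$ depends on $T$ only through the multiplicative constant $c_T$; in the refined problem this structure is lost, since $\tildeQproxi_g$ depends on $T$ through $T(Z)$ inside a positive part. Your instinct that this is the nontrivial step is therefore correct, and your proposed route via the thresholds $\gamma_f^{(T)}$ is a reasonable line of attack, but you should be aware that you are attempting to prove more than the paper actually establishes. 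Your own sketch already flags the obstruction: the map $x\mapsto\{x-s\}_+$ is flat on $[0,s]$, so pointwise monotonicity in $S_g\,g(f)$ does not by itself rule out minimizers outside $\Qproxi$. If your aim is to match the paper, stop after the inclusion and the solvability equivalence; if your aim is to close the gap, treat the equality as an open claim rather than part of the intended corollary.
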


Analogously to Proposition \ref{prop:bound-1}, the following result can be
shown.

\begin{proposition} \label{prop:bound-2}
 For any $\tilde g \in \Qorig$, we have
 $$\textstyle
   1 \leq \tildeQspec1_{g^*} 
     \leq \inf_{y_0 \in K} \tildeQspec2_{g^*}(y_0)
     \leq \Qorig_{\tilde g} \leq \Qorig_{g^*},$$
 where $g^* \in \Qproxi$ is given by \eqref{eq:g-star}.
\end{proposition}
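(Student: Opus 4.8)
The plan is to establish the five-term chain by proving each adjacent inequality in turn, adapting the arguments of Proposition \ref{prop:bound-1} to the refined quantities and invoking the integral formulas obtained in Proposition \ref{prop:refinement}. The two outer inequalities are immediate. On the left, the elementary bound $\tildeQspec1_{g} \geq \Qspec1_{g}$ noted before Lemma \ref{lem:piplus}, together with $\Qspec1_{g^*} = 1$ from part~3 of Proposition \ref{prop:bound-1}, gives $\tildeQspec1_{g^*} \geq 1$. On the right, $\tilde g \in \Qorig = \arg\min_g \Qorig_g$ yields $\Qorig_{\tilde g} = \min_g \Qorig_g \leq \Qorig_{g^*}$ directly.

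For the second inequality $\tildeQspec1_{g^*} \leq \inf_{y_0 \in K} \tildeQspec2_{g^*}(y_0)$ I would argue pathwise. Since the summand $\EE|\Pi_{0,K}^{+}|$ is common to both functionals (and independent of $g$ by Lemma \ref{lem:piplus}), it suffices to compare the subextremal contributions in \eqref{eq:refined-1} and \eqref{eq:refined-2}. Because $\sup_{\tilde y \in K} Z^{(\infty)}(\tilde y) \geq Z^{(\infty)}(y_0)$ for every $y_0$, we have
$$\esssup_{f \in \HH} \frac{\sup_{y \in K} f(y)}{g(f)\,\sup_{\tilde y \in K} Z^{(\infty)}(\tilde y)} \leq \esssup_{f \in \HH} \frac{\sup_{y \in K} f(y)}{g(f)\, Z^{(\infty)}(y_0)},$$
so for each realization the set of $t \in \Pi_{0,K}^{-}$ counted in $\tildeQspec1_g$ is contained in the one counted in $\tildeQspec2_g(y_0)$. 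Taking expectations gives $\tildeQspec1_g \leq \tildeQspec2_g(y_0)$ for all $g$ and $y_0$; specializing to $g=g^*$ and taking the infimum over $y_0$ finishes this step.

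The crux is the middle inequality $\inf_{y_0 \in K} \tildeQspec2_{g^*}(y_0) \leq \Qorig_{\tilde g}$, which I would split into two pieces. First, Proposition \ref{prop:refinement} applied with the max-linear evaluation functional $T(h)=h(y_0)$ shows $g^* \in \Qproxi \subset \arg\min_g \tildeQspec2_g(y_0)$ for every $y_0$, whence $\tildeQspec2_{g^*}(y_0) \leq \tildeQspec2_{\tilde g}(y_0)$ and so $\inf_{y_0} \tildeQspec2_{g^*}(y_0) \leq \inf_{y_0} \tildeQspec2_{\tilde g}(y_0)$. It then remains to show $\inf_{y_0 \in K} \tildeQspec2_g(y_0) \leq \Qorig_g$ for arbitrary $g$ (which I apply to $g=\tilde g$). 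Here I would use the representations from the proof of Proposition \ref{prop:refinement}: both $\tildeQspec2_g(y_0)$ and $\Qorig_g$ equal $\EE_Z \int_\HH \sup_{y \in K} (f(y)/Z(y)) \, H(\dd f)$ plus a term of the form $\EE_Z \int_\HH \{ \beta\, g(f) - \sup_{y \in K} (f(y)/Z(y))\}_{+} H(\dd f)$, where $\beta = \esssup_{h \in \HH} \sup_{y \in K} (h(y)/(g(h) Z(y)))$ for $\Qorig_g$, and $\beta = \esssup_{h \in \HH}(\sup_{y \in K} h(y)/g(h))/Z(y_0)$ for $\tildeQspec2_g(y_0)$. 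Since $x \mapsto \{\beta x - a\}_{+}$ is nondecreasing in $\beta$, it suffices to produce, for every $\varepsilon>0$, a point $y_0 \in K$ satisfying the pathwise bound $\esssup_{h \in \HH}(\sup_{y \in K} h(y)/g(h))/Z(y_0) \leq (1+\varepsilon)\, \esssup_{h \in \HH}\sup_{y \in K}(h(y)/(g(h)Z(y)))$ almost surely.

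To construct such a $y_0$ I would invoke Assumption \ref{as:sep-maxlin}, which supplies a countable $K_0 \subset K$ over which both suprema are attained and, crucially, permits interchanging $\sup_{y \in K_0}$ with $\esssup_{f \in \HH}$ as in \eqref{eq:countable}. This gives $\esssup_{h}(\sup_{y} h(y)/g(h)) = \sup_{y \in K_0} \esssup_{h}(h(y)/g(h))$, so for each $\varepsilon$ there is a deterministic $y_0=y_0(\varepsilon) \in K_0$ with $\esssup_{h}(h(y_0)/g(h)) \geq (1+\varepsilon)^{-1}\esssup_{h}(\sup_{y} h(y)/g(h))$; combined with the pathwise lower bound $\esssup_{h}\sup_{y}(h(y)/(g(h)Z(y))) \geq \esssup_{h}(h(y_0)/(g(h)Z(y_0)))$ (pulling the constant $Z(y_0)$ out of the essential supremum) this yields the desired $\varepsilon$-inequality. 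Substituting into the integral representation and letting $\varepsilon \downarrow 0$, with monotone convergence on the $\{\cdot\}_{+}$ term, gives $\inf_{y_0} \tildeQspec2_g(y_0) \leq \Qorig_g$. The main obstacle is precisely this final step: for a single fixed $y_0$ the quantity $\beta$ of $\tildeQspec2_g$ need not be dominated by the one of $\Qorig_g$, so one cannot avoid combining the essential-supremum/supremum interchange granted by Assumption \ref{as:sep-maxlin} with the $\varepsilon$-limit, exactly mirroring the structure of part~4 of Proposition \ref{prop:bound-1}.
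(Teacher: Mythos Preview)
Your proposal is correct and follows precisely the route the paper intends: the paper only writes ``Analogously to Proposition \ref{prop:bound-1}, the following result can be shown,'' and your argument is exactly this analogy, carried out with the integral formulas from Proposition \ref{prop:refinement} in place of the direct stopping-rule manipulation (which is the natural adaptation since the refined quantities $\tildeQspec1_g$, $\tildeQspec2_g$ are not of the form $\EE\min\{m:\ldots\}$). The only cosmetic point is that ``monotone convergence'' is not needed in the final step: once you have $\beta_T \leq (1+\varepsilon)\beta_Q$ and use that $\beta_Q g(f) - \sup_y f(y)/Z(y) \geq 0$ for $H$-a.e.\ $f$, the inequality $\tildeQspec2_g(y_0(\varepsilon)) \leq (1+\varepsilon)\,\Qorig_g$ follows by direct computation, and letting $\varepsilon\downarrow 0$ in this scalar inequality finishes the proof.
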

\medskip

\begin{remark}
 In view of Proposition \ref{prop:refinement}, it appears promising to replace
 $g^*$  by an element of
 $$\Qproxi_{0} = \arg\min_{g} \bigg\{ \esssup_{f \in \HH} \EE_{ Z} 
 \bigg( \sup_{y \in K} \frac{f(y)}{g(f)  Z(y)}\bigg) \bigg\}$$
 to improve the partial minimization of $\Qorig$.
 If the functional
 $$ \textstyle L_{0}: \HH \to (0,\infty), \quad f \mapsto \EE_{ Z} \left( \sup_{y \in K}  Z(y)^{-1} f(y)\right)$$
 is measurable and if 
 $$ \textstyle c_{0} = \int_\HH \EE_{ Z} \left( \sup_{y \in K}  Z(y)^{-1} f(y)\right) \, H(\dd f) < \infty,$$
 then, by Theorem
 \ref{thm:argmin-element}, an element of $\Qproxi_{0}$ is given by
 \begin{equation} \label{eq:optimal-g-xi-altern}
  \textstyle g_{0}(f) = (c_{0}(f))^{-1} \EE_{ Z} \left( \sup_{y \in K}  Z(y)^{-1} f(y) \right).
 \end{equation}
 Conversely, for every $g \in \Qproxi_{0}$, \eqref{eq:optimal-g-xi-altern}
 holds for $H$-a.e.\ $f \in \HH$.
 
 Note that the calculation of $g_{0}$ is much more laborious than the 
 calculation of $g^*$, as the former one requires the computation of the
 expectation of  $\sup_{y \in K} ( Z(y)^{-1} f(y))$.
 However, computational experiments in case of the original Smith process 
 \citep{smith90} on finite intervals $[-b,b] \subset \RR$ indicate that 
 $\Qorig_{g_{0}}$ is not significantly smaller than $\Qorig_{g^*}$.
 Thus, the usage of $g^*$ seems to be preferable over the usage of $g_{0}$
 due to its accessibility.
\end{remark}

\section{Examples for the normalized spectral representation} \label{sec:examples}

In this section, we will investigate some specific cases for the process $Z$
from Proposition \ref{prop:1} and for the index set $K$. 
Under the general assumption that the mapping 
$f \mapsto \tilde f = \sup_{y \in K} f(y)$ is measurable, we consider the
normalized spectral representation $\tilde Z =_d Z$ in \eqref{eq:z-tilde}.
For some examples, we explicitly calculate $\tilde f$ and $c$ which are crucial
for the stopping rule and the expected number $\EE m$ of considered spectral
functions (cf.\ Equations \eqref{eq:q-assess}--\eqref{eq:m-assess}) and also
important for the implementation of a simulation algorithm for $\tilde Z$.

The simplest example is the toy example presented in the introduction, i.e.\
the case, where $K$ consists of a single point, $K=\{y_0\}$.
Then, we have $\tilde f(y_0) = f(y_0)$ and, thus, the normalized spectral
representation \eqref{eq:z-tilde} simplifies to 
$\tilde Z(y_0) = \max_{t \in \Pi_0} t$ as $c = \int_{\HH}f(y_0)\,H(\dd f) = 1$.
Thus, only the largest point of $\tilde \Pi$ needs to be considered for a
realization of $\tilde Z(y_0)$ as discussed in the introduction. Next, we deal
with more sophisticated examples.

\subsection{Mixed moving maxima} \label{subsec:mmm}

Let $Z$ be a mixed moving maxima process on some compact set $K \subset \RR^d$,
that is, $f(y) = h(y-x)$ for some random shift $x \in S \subset \RR^d$ and a
random function $h: \RR^d \to [0,\infty)$ and
\begin{equation} \label{eq:h-mmm}
H(C) = (\Lambda \times \pi)\{ (x,h): h(\cdot-x) \in C\},
\end{equation} 
$C \subset \HH \subset [0,\infty)^K$, where $\Lambda$ is locally finite measure
on $S$ and $\pi$ is a probability measure on some Polish space 
$P \subset [0,\infty)^{\RR^d}$. Then, the law $g^*H$ of
$F_t = h_t(\cdot-X_t)$ in \eqref{eq:z-tilde} can be decomposed in the following
way: First, we consider $h_t$ with distribution 
$\PP(h_t \in \dd h) = \xi(h) \pi(\dd h)$, $h \in P$, where
$\xi(h) = c^{-1} \int_S \sup_{y \in K} h(y-z) \Lambda(\dd z)$ and
$c= \int_P \int_S \sup_{y \in K} g(y-z) \, \Lambda(\dd z) \, \pi(\dd g)$. 
Then, $X_t \mid h_t$ follows the law 
$\PP(X_t \in \dd x \mid h_t \in \dd h) = \mu(x,h)$ where 
$$\mu(x,h) = \left(\int_S \sup_{y \in K} h(y-z) \Lambda(\dd z)\right)^{-1} \sup_{y \in K} h(y-x), \quad x \in S, \ h \in P.$$
Here, $t \in \Pi_0$ cannot contribute to $\tilde Z$ if
$$\textstyle t < \inf_{y \in K} \left( \tilde Z(y) \big/ \esssup_{(x,h) \in S\times P} \frac{h(y-x)}{\sup_{\tilde y \in K} h(\tilde y-x)}\right).$$
The right-hand side equals $\inf_{y \in K} \tilde Z(y) / c$ a.s.\ if and only 
if conditions \eqref{eq:cond-sharp-new1} and \eqref{eq:cond-sharp-new2} are 
met. In case of a mixed moving maxima process, \eqref{eq:cond-sharp-new2} is 
equivalent to
\begin{equation} \label{eq:cond-sharp-mmm}
(\Lambda \times \pi)\bigg( (x,h) \in S \times P: \ \frac{h(y-x)}{\sup_{\tilde y \in K} h(\tilde y - x)} > 1 - \varepsilon\bigg) > 0.
\end{equation}

\begin{remark}
 Note that the decomposition of $g^*H$ relies on the fact that $H$ is the
 push forward measure of the product measure $\Lambda \times \pi$. This
 procedure can be generalized for the case that $H$ is the push-forward
 measure of a product measure of the form 
 $\nu_1 \times \ldots \times \nu_n$.
\end{remark}

Note that the results for mixed moving maxima processes can also be applied if
$Z$ is a stationary process with a representation by a stochastic process as in
\eqref{eq:incremental}. In this case,  we may introduce some ``random
shifting''. The following proposition can be shown in exactly the same way as
Theorem 2 in \citet{OKS12}.

\begin{proposition} \label{prop:incr-shift}
 Let $\{W(y), \ y \in \RR^d\}$ be a stochastic process such that the max-stable
 process $\{Z(y), \ y \in \RR^d\}$ given by \eqref{eq:incremental} is 
 stationary. Furthermore, let $S \subset \RR^d$. Then, for any probability
 measure $\Lambda$ on $S$, we have that
 $$ Z(\cdot) =^d \max_{t \in \Pi_0} t W_t(\cdot - X_t),$$
 where $X_t \sim_{i.i.d.} \Lambda$, $t \in (0,\infty)$. Equivalently,
  \begin{equation} \label{eq:incr-shift}
    Z(\cdot) =^d \max_{(t,x,f) \in \Pi} t f(\cdot-x),
  \end{equation}
 where $\Pi$ is a Poisson point process on $(0,\infty) \times S \times P$
 with intensity measure $ t^{-2} \DD t \times \Lambda(\dd x) \times \pi(\dd f)$
 with $\pi$ being the law of $W$ and $P \subset [0,\infty)^{\RR^d}$ being a 
 Polish space.
\end{proposition}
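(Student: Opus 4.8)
The plan is to verify the claimed identity in law by comparing finite-dimensional distributions, using the explicit form of the exponent measure of a max-stable process driven by a Poisson point process. First I would observe that the two displayed representations describe the same object: attaching to each point $t \in \Pi_0$ an independent mark $X_t \sim \Lambda$ together with an independent copy $W_t$ of $W$ produces, by the marking theorem for Poisson point processes, exactly the Poisson point process $\Pi$ on $(0,\infty) \times S \times P$ with intensity $t^{-2}\DD t \times \Lambda(\dd x) \times \pi(\dd f)$ appearing in \eqref{eq:incr-shift}. Hence it suffices to show that $\max_{(t,x,f) \in \Pi} t\, f(\cdot - x)$ has the same law as $Z$, which I would do through the void probabilities of its finite-dimensional distributions.

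For a finite set $\{y_i : i \in I\} \subset \RR^d$ and levels $z_i > 0$, I would compute the relevant void probability exactly as in the proof of Proposition \ref{prop:1}: carrying out the inner integration over $t$ against $t^{-2}\DD t$ converts the contributing event $\{t\, f(y_i - x) > z_i \text{ for some } i\}$ into the value $\max_{i \in I} f(y_i - x)/z_i$, giving
\begin{align*}
 \PP\Big(\max_{(t,x,f)\in\Pi} t\, f(y_i - x) \le z_i, \ i \in I\Big)
 &= \exp\Big(-\int_S \EE_W\Big[\max_{i \in I} \frac{W(y_i - x)}{z_i}\Big]\,\Lambda(\dd x)\Big),
\end{align*}
where $\EE_W$ denotes expectation over $W \sim \pi$. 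Writing $\psi(x) := \EE_W\big[\max_{i \in I} W(y_i - x)/z_i\big]$, the exponent above is the $\Lambda$-average $\int_S \psi(x)\,\Lambda(\dd x)$.

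The decisive step is to note that stationarity of $Z$ forces $\psi$ to be constant. Indeed, applying \eqref{eq:incremental} to the shifted arguments $y_i - x$ gives $\PP(Z(y_i - x) \le z_i, \ i \in I) = \exp(-\psi(x))$, while stationarity yields $\PP(Z(y_i - x) \le z_i, \ i \in I) = \PP(Z(y_i) \le z_i, \ i \in I) = \exp(-\psi(0))$ for every $x \in \RR^d$. Hence $\psi(x) = \psi(0)$ for all $x$, and since $\Lambda$ is a \emph{probability} measure on $S$ we obtain $\int_S \psi(x)\,\Lambda(\dd x) = \psi(0) = \EE_W[\max_{i \in I} W(y_i)/z_i]$, which is precisely the exponent of $Z$ under \eqref{eq:incremental}. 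The finite-dimensional distributions thus coincide and the proposition follows.

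I expect the only genuine subtleties to be bookkeeping rather than conceptual. The crux to highlight is that stationarity is exactly the condition making the $\Lambda$-averaged shifted exponent $\int_S \psi\,\dd\Lambda$ collapse to the unshifted exponent $\psi(0)$, and that the normalization $\Lambda(S)=1$ is essential here. The remaining technical point is measurability: one must ensure that $(x,f) \mapsto f(y_i - x)$, and hence $\psi$, are jointly measurable so that the exponent integral and the marking construction are well defined on the Polish mark space $S \times P$; given that $P \subset [0,\infty)^{\RR^d}$ is Polish and $\Lambda$, $\pi$ are genuine measures, this is routine but should be recorded.
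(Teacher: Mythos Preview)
Your argument is correct. The paper does not actually prove this proposition; it simply states that it ``can be shown in exactly the same way as Theorem~2 in \citet{OKS12}.'' Your direct computation via finite-dimensional distributions---marking $\Pi_0$ to obtain $\Pi$, evaluating the void probability to get the exponent $\int_S \psi(x)\,\Lambda(\dd x)$, and then using stationarity of $Z$ to conclude that $\psi$ is constant so that the $\Lambda$-average collapses to $\psi(0)$---is precisely the standard route and almost certainly what the cited reference does as well. The emphasis on $\Lambda$ being a \emph{probability} measure is exactly the right point to flag.
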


Thus, using representation \eqref{eq:incr-shift} for $Z|_K$, where
$K \subset \RR^d$ is compact, with an arbitrary probability measure
$\Lambda$ on some set $S \subset \RR^d$, we are in the framework of a mixed
moving maxima process. By Proposition \ref{prop:c-finite}, the number $m$ of
considered 
spectral functions in the normalized spectral representation $\tilde Z$ is 
finite a.s.\ if and only if $\EE \sup_{y \in K} W(y) < \infty$. 
Recall that the normalized spectral representation is unique (cf.\ Proposition 
\ref{prop:uniqueness}). Thus, the representation as well as the number $m$
of considered spectral functions do not depend on the choice of $\Lambda$.
However, different choices of $\Lambda$ may lead to different ways of 
decomposing the measure $g^*H$.

\subsection{Monotone, radial symmetric shape function and $K$ a convex, compact set} \label{subsec:radial}

Let $Z$ be a stationary moving maxima process on $\RR^d$ restricted to a convex
compact set with a radial symmetric shape function, that is, let $Z$ be defined
as in Proposition \ref{prop:1} where $H$ is given by
$H(A) = \lambda(\{ x \in \RR^d: \ f_0(\|\cdot - x\|) \in A\})$
for any measurable set $A \subset [0,\infty)^{\RR^d}$, $\lambda$ denotes the
Lebesgue measure on $\RR^d$ and $f_0: [0,\infty) \to [0,\infty)$.
Further, we assume that $f_0$ is monotonically decreasing.
Then, 
$$\textstyle \tilde f_0(x) := \sup_{y \in K} f_0(\|y-x\|) = f_0(d(x,K))$$
with $d(x,K) = \min_{y \in K} \|y-x\|$ and thus $g^*$ as defined in
\eqref{eq:g-star} satisfies
$$ g^*(f_0(\|\cdot - x\|)) = c^{-1} \tilde f_0(x) = c^{-1} f_0(d(x,K)), $$
where $c = \int_{\RR^d} \tilde f_0(x) \DD x = \int_{\RR^d} f_0(d(x,K)) \DD x$.
If $f_0$ is continuous at the origin, then condition 
\eqref{eq:cond-sharp-mmm} is met, which implies 
$Q_{g^*} = \EE\left( c / \inf_{y \in K} \tilde Z(y)\right).$

In the following, we calculate $\tilde f_0(x)$, $x \in \RR^d$, and
$c$ for different cases of $K$.
First, consider the case that $K$ is a $d$-dimensional ball $b(0,R)$ centered
at the origin with radius $R$, i.e.\
$K = b(0,R) = \{ x \in \RR^d: \|x\| \leq R\}$, we get that
$\tilde f_0(x) =  f_0(0) \1_{\|x\|\le R} + \1_{\|x\| > R} f_0(\|x\| - R).$
Assume that the random variable $X$ follows the probability density 
that is proportional to $\tilde f_0$. Then we get
$$\textstyle \PP(\|X\| \le r) = 
 c^{-1} \left( f_0(0) (r\wedge  R)^d + d \int_0^{(r-R) \vee 0} (\tilde r+R)^{d-1} f_0(\tilde r) \DD \tilde r\right)
$$
with $c = f_0(0) R^d + d \int_0^\infty (\tilde r+R)^{d-1} f_0(\tilde r) \DD \tilde r  < \infty$.

Second, consider the case that $K$ is a $d$-dimensional cube is of particular interest,
i.e.\ the case that $K = [-R,R]^d$ for some $R >0$.
Then, we get 
\begin{equation}
\tilde f_0\left((x_1, \ldots, x_d)\right)
={} f_0\left( \|( (|x_1|-R) \vee 0, \ldots, (|x_d|-R) \vee 0 )\| \right). \label{eq:ftilde-cube}
\end{equation}
We consider the subcases $d=1$ and $d=2$ to derive explicit formulae.
If $d=1$, then $K= [-R,R] = b(0,R)$, and, according to the formulae above, we get that
$\tilde f_0(x) = \mathbf{1}_{|x| \leq R} f_0(0) + \mathbf{1}_{|x| > R} f_0(|x|-R)$
and thus, $$c = \int_{\RR} \tilde f_0(x) \dd x = 2Rf_0(0) + \int_{|x| > 0} f_0(|x|) \dd x = 2Rf_0(0) +1.$$
If $d=2$, we obtain
\begin{align*} 
\tilde f_0(x) ={}& \mathbf{1}_{|x_1| \vee |x_2| \leq R} f_0(0) + 2 \cdot \mathbf{1}_{|x_1| \wedge |x_2| \leq R, |x_1| \vee |x_2| > R} f_0((|x_1| \wedge |x_2|)-R) \nonumber\\
& \hspace{0.2cm} + \mathbf{1}_{|x_1| \wedge |x_2| > R} f_0\left(\left\|\left( |x_1|-R, |x_2|-R \right)\right\|\right). 
\end{align*}
Thus,
\begin{align*}
 c ={} & (2R)^2 \cdot f_0(0) + 2 \cdot 2R \cdot \int_\RR f_0(|x|) \DD x + \int_{\RR^2} f_0(\|x\|) \dd x \displaybreak[0]\\
          ={}& 4R^2f(0) + 4R \int_{\RR} f_0(|x|) \DD x + 1.
\end{align*}

Next, we further specify explicit examples on the function $f_0$, under which
the constant $c$ can be further calculated.
\begin{example}\label{ex:mmm}
\begin{enumerate}
\item Indicator function\\ 
   We consider the case that the shape function is the indicator function
   of a ball $b(0,r)$ with radius $r>0$ centered at the origin, i.e. 
   $f_0(\|x\|) = \mathbf{1}_{\|x\| \leq r}$. In this case we have
   $\tilde f_0(x) = \mathbf{1}_{K \oplus b(0,r)}(x)$ and 
   $c = {\rm vol}(K \oplus b(0,r))$ where $\oplus$ denotes morphological
   dilation and ${\rm vol}$ the $d$-dimensional volume. Here, all the 
   finite approximations derived from the normalized spectral representation
   coincide with the corresponding approximations resulting from the algorithm
   proposed by \citet{schlather02}.
\item Smith model\\
   As the second example, we consider the Gaussian extreme value process
   \citep{smith90} where $f_0$ is a Gaussian density function. Here, for 
   simplicity, we assume the shape function to be the density of a 
   multivariate normal random vector 
   $Y \sim \mathcal{N}(\mathbf{0}, \sigma^2 {\rm Id})$ with $\sigma > 0$.
   Thus, it is a radial symmetric monotone function.
   Let $K = [-R,R]^d$ for some $R>0$
   Then, by the considerations above, we get that $\tilde f_0$ is of type
   \eqref{eq:ftilde-cube} and for $d = 1,2$, we obtain
   \begin{align*}
    c = \begin{cases}
         \sqrt\frac{2}{\pi}\frac R \sigma + 1,& d=1\\
         \frac{2}{\pi}\left(\frac R \sigma\right)^2 + 2 \sqrt\frac{2}{\pi}\frac R \sigma + 1, &d=2.
        \end{cases}
   \end{align*}
\end{enumerate}
\end{example}

\begin{remark}
 By the considerations in Subsection \ref{subsec:mmm}, all these results can be
 generalized for the case that the shape function is not deterministic, but
 random with law $\pi$, i.e.
 $$ H(A) = (\lambda \times \pi)(\{ (x,f_0) \in \RR^d \times [0,\infty)^{[0,\infty)}: \ f_0(\|\cdot - x\|) \in A\}).$$
\end{remark}

Now, for random shape functions with law $\pi$, we consider the case that $K$
grows unboundedly. For simplicity, we assume that $K = b(0,R) \subset \RR^d$
with $R \to \infty$. Here, by the considerations above, we have
$\tilde f_0(x) =  f_0(0) \1_{r\le R} + \1_{r > R} f_0(r - R)$ for every
$f_0 \in {\rm supp}(\pi)$.
Then, as a special case of Subsection \ref{subsec:mmm},
we get that $\xi(h) = (\int f_0(0) \pi(\dd f_0))^{-1} (h(0) + o(1))$
and $\mu(x,h) = \frac{R^{-d}}{|b(0,1)|} + o(R^{-d})$.
Thus, we obtain the representation
$$ \tilde Z(y) = \max_{t \in \tilde \Pi_0} t
   \frac{|b(0,R)| \, \int_{h} h(0) \pi(\dd h) \, h_t(y,X_t)}{h_t(0)} + o(1), \quad \|y\| \leq R,$$
where $h_t \sim \xi(h) \pi(\dd h)$ and $X_t \mid h_t \sim \mu(x, h_t) \DD x$,
$t > 0$, are all independent.
   
This representation is very similar to the standard mixed moving maxima 
representation used for the simulation algorithm proposed in 
\citet{schlather02}. The main difference, however, is that the shape functions
$h_t$ are transformed to have the same value at the origin and are drawn with 
modified law $\PP(h_t \in \dd \cdot)$ instead of $\pi$. 
This difference also causes a different asymptotic behavior of the number of
considered shape functions. While
$Q_{g^*} = \EE[(\int f_0(0) \pi(\dd f_0) \cdot |b(0,1)| R^d + o(R^d)) / \inf_{y \in b(0,R)} Z(y)]$, 
the expected number of spectral functions taken into account in Schlather's 
\citeyearpar{schlather02} algorithm equals
$\EE[(\esssup_{f_0 \in {\rm supp}(\pi)} f_0(0) \cdot |b(0,1)| R^d + o(R^d)) / \inf_{y \in b(0,R)} Z(y)]$.
Thus, by using the normalized spectral representation, the number is 
asymptotically decreased by a factor
$\int f_0(0) \pi(\dd f_0) / \esssup_{f \in {\rm supp}(\pi)} f(0).$
For details on Schlather's \citeyearpar{schlather02} algorithm and the number
of considered spectral functions, see Section \ref{sec:simu}.

\subsection{Brown-Resnick Processes} \label{subsec:BR}

We consider a Brown-Resnick process
\begin{equation} \label{eq:br}
 Z(y) = \max_{t \in \Pi_0} t \exp(B_t(y) - \sigma^2(y)/2), \quad y \in K,
\end{equation}
on a compact set $K \subset \RR^d$, where $\Pi_0$ is a Poisson point process
on $(0,\infty)$ with intensity measure $t^{-2} \DD t$ and $B_t$, $t > 0$, are
independent copies of a stochastic process $\{B(y), \ y \in K\}$. Here, $B$
is a zero-mean Gaussian process with stationary increments, variogram $\gamma$,
and variance $\sigma^2(\cdot)$. Note that $Z$ is stationary and its law only
depends on $\gamma$  \citep[cf.][]{KSH09}.
As the representation \eqref{eq:br} is of type \eqref{eq:incremental}, we can 
use the fourth condition of Proposition \ref{prop:c-finite} for the existence 
of the  normalized spectral representation. Thus, the number $m$ from the 
stopping rule \eqref{eq:stopping-trafo} is finite if and only if 
$\EE\left( \sup_{y \in K} \exp\left(B(y) - \sigma^2(y) / 2\right)\right) < \infty.$

However, if $\gamma$ tends to infinity fast enough, the original definition 
turns out to provide inappropriate finite approximations and the mixed moving
maxima representation is a promising option \citep[cf.][]{OKS12}.
Thus, we aim to derive the normalized spectral representation starting
with a stationary mixed moving maxima representation, i.e.\ $H$ is defined by
\eqref{eq:h-mmm}, where $\Lambda$ is the Lebesgue measure on $S=\RR^d$.
By \cite{KSH09}, such a representation exists if 
$B(y) - \sigma^2(y) / 2 \to -\infty$ a.s.\ for $\|y\| \to \infty$
and $B$ has continuous sample paths. In this case, the random variables 
$\tau = \arg\max\left( B(\cdot) - \sigma^2(\cdot)/2\right)$ and
$\upsilon = \max \exp\left( B(\cdot) - \sigma^2(\cdot)/2\right)$ are 
well-defined and, by \cite{EMOS14}, the shape function $h \sim \pi$
is given by 
$$h(\cdot) \stackrel{d}{=} \bigg(\int_{\RR^d} \int_{C(\RR^d)} f(t) \, \PP_{h_0}(\dd f) \DD t \bigg)^{-1} h_0(\cdot)$$
where $h_0$ has the law 
$$ \PP_{h_0}(A) = \frac{\int_0^\infty y \PP\left(\upsilon^{-1} W(\cdot+\tau) \in A,
                       \ \tau \in [0,1]^d \mid \upsilon = y\right) \PP_\upsilon(\dd y)}
                       {\int_0^\infty y \PP(\tau \in [0,1]^d \mid \upsilon = y) \PP_\upsilon(\dd y)},$$
with $W(\cdot) = \exp(B(\cdot) - \sigma^2(\cdot)/2)$.
Thus, we have $\arg\max h = \mathbf{0}$ and 
$\max h = \big(\int_{\RR^d} \int_{C(\RR^d)} f(t) \, \PP_{h_0}(\dd f) \DD t \big)^{-1}$ a.s.

Furthermore, as $B$ has continuous sample paths, we have that, for any compact
set $K \subset \RR^d$, 
$\PP( \sup_{y \in K} (B(y) - \sigma^2(y)/2) < \infty) = 1$ and thus, by 
Theorem 2.1.2 in \cite{adlertaylor},
$$\textstyle \EE \sup_{y \in K} \exp\left(B(y) - \sigma^2(y)/2\right) < \infty.$$ 
Hence, by Proposition \ref{prop:c-finite}, we obtain $c < \infty$, i.e.\
the existence of the normalized spectral representation.
As $S=\RR^d$ and $h$ is continuous at the origin, we get that
\eqref{eq:cond-sharp-mmm} holds, and thus, by the stopping rule 
\eqref{eq:stopping-trafo}, a point $t \in \Pi_0$ cannot contribute to 
$\{\tilde Z(y), \, y \in K\}$ if $t < c^{-1} \inf_{y \in K} \tilde Z(y)$.
Hence, we have a valid stopping rule for Brown-Resnick processes,
as $\inf_{y \in K} \tilde Z(y) > 0$ a.s.

Remind that the normalized spectral functions 
$F_t^* =c F_t / \sup_{y \in K} F_t(y)$ in \eqref{eq:z-tilde} 
are uniquely determined by \eqref{eq:sup-const} with
$$\textstyle c = \EE \sup_{y \in K} \exp\left(B(y) - \sigma^2(y)/2\right) 
    = \int \int_{\RR^d} \sup_{y \in K} f(y-x) \DD x \, \pi(\dd f),$$
(cf.\ Proposition \ref{prop:uniqueness}). 
In particular, the normalized spectral representation for the representation
\eqref{eq:br} is the same as for the equivalent mixed moving maxima 
representation. However, the representations provide different ways to 
decompose the distribution $g^*H$ of $F_t$.

\section{Comparison to the algorithm proposed in \citet{schlather02}} \label{sec:simu}

In this section, we compare the number of spectral functions considered in the
normalized spectral representation to that considered in Schlather's
\citeyearpar{schlather02} algorithm for mixed moving maxima processes.
First, we present the algorithm proposed by \cite{schlather02} and calculate
the number of considered shape functions in the general case. In Subsections
\ref{subsec:comp-theo} and \ref{subsec:comp-simu}, we compare this number to
the corresponding number for the normalized spectral representation in case of
the Smith process \citep{smith90} theoretically and in a simulation study.

Let $\{Z(y): \, y \in \RR^d\}$ be a stationary mixed moving maxima process,
i.e.\ $H$ is given by \eqref{eq:h-mmm} and $\Lambda$ is the Lebesgue measure
on $\RR^d$. In \citet{schlather02}, a simulation algorithm is proposed which is
shown to be exact if the shape functions  $h \in {\rm supp}(\pi)$ are 
jointly bounded and have joint support, i.e.\
$\pi(\{h: \, h(x) < C \text{ for all } x \in \RR^d\}) = 1$ for some $C>0$ and
$\pi(\{h: \, {\rm supp}(h) \subset b(0,r)\}) = 1$ for some $r>0$ 
\citep[][Thm. 4]{schlather02}.
In this case,
\begin{align*}
 Z(y) =_d{}& |K \oplus b(0,r)| \cdot \max_{1 \leq n \leq M} \frac{F_n(y - U_n)}{\sum_{k=1}^n \xi_k},
 \quad y \in K,
\end{align*}
where $\xi_k$ are independent and identically distributed random variables with
standard exponential distribution, $F_k$ follow the law $\pi$, $U_k$ are
uniformly distributed on $K \oplus b(0,r)$ and all these random variables are
independent. Further, $M$ is a random number defined by
$$ M = \min\left\{ m \in \NN: \ \frac{C}{\sum_{k=1}^{m+1} \xi_k} 
                   \leq \inf_{x \in K} \max_{1 \leq n \leq m} \frac{F_n(x-U_n)}{\sum_{k=1}^n \xi_k}\right\}.$$
Here, analogously to Proposition \ref{prop:refinement}, the following result
can be shown.
\begin{proposition} \label{prop:mmm-number-exact}
 The expectation of $M$, defined as above, equals
 $$ \EE M = \EE \left( \frac{|K \oplus b(0,r)| \cdot C}{\inf_{y \in K} Z(y)}\right).$$
\end{proposition}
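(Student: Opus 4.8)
The plan is to mirror the proof of Proposition \ref{prop:refinement}, the only change being that the adaptive bound $\esssup_{f} \sup_{y \in K} f(y)/g(f)$ there is now replaced by the crude uniform bound $C$ that jointly dominates all shape functions. Throughout I write $V = |K \oplus b(0,r)|$ and $\zeta = \inf_{y \in K} Z(y)$, and I work with the Poisson point process on $(0,\infty) \times \HH$ obtained by restricting the shifts $x$ to the relevant region $K \oplus b(0,r)$; with $g \equiv 1$ this is exactly the law from which Schlather's algorithm samples, and the corresponding spectral measure $H$ has total mass $H(\HH) = V$.

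First I would reformulate $M$ as a counting variable. Ordering the points by $t$ descending as $t_1 > t_2 > \cdots$ with $t_n = V/\sum_{k=1}^n \xi_k$, the stopping rule becomes $t_{m+1} C \le \inf_{y \in K} Z^{(m)}(y)$. The key observation is that every $K$-extremal point $(t,f)$ obeys $t C \ge t f(y) = Z(y) \ge \zeta$ at the location $y$ where it is extremal, hence $t \ge t^{*} := \zeta/C$, because $f(y) \le \sup_{y\in K} f(y) \le C$ holds $\pi$-a.s. Consequently, for $m^{*} = |\{n : t_n > t^{*}\}|$ all contributing points lie among $t_1, \dots, t_{m^{*}}$, so $Z^{(m^{*})} = Z$ on $K$ and $t_{m^{*}+1} C \le t^{*} C = \zeta = \inf_{y\in K} Z^{(m^{*})}(y)$, i.e. the rule first holds at $m^{*}$ and is violated for every smaller index. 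This gives the clean identity $M = |\{(t,f) : t > \zeta/C\}|$.

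Next I would split this count into its $K$-extremal and $K$-subextremal parts, $M = |\Pi_{0,K}^{+}| + |\{(t,f) \in \Pi_{0,K}^{-} : t > \zeta/C\}|$, which is legitimate since all extremal points exceed the threshold. For the first summand, Lemma \ref{lem:piplus} gives $\EE|\Pi_{0,K}^{+}| = \EE_Z \int_{\HH} \sup_{y \in K} \frac{f(y)}{Z(y)} \, H(\dd f)$. For the second summand I would condition on $Z^{(\infty)} = Z$ and invoke Lemma \ref{lem:piminus} with $g \equiv 1$: the subextremal points form a Poisson process whose intensity carries the factor $\mathbf{1}_{t f(y) < Z(y) \text{ for all } y \in K} = \mathbf{1}_{t < 1/\sup_{y\in K}(f(y)/Z(y))}$. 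Integrating $t^{-2}$ over $t \in (\zeta/C,\, 1/\sup_{y\in K}(f(y)/Z(y)))$ produces $\big(C/\zeta - \sup_{y\in K} \tfrac{f(y)}{Z(y)}\big)_{+}$, and the positive part is redundant since $\sup_{y\in K} f(y) \le C$ forces $\sup_{y\in K} f(y)/Z(y) \le C/\zeta$ $H$-a.e.

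Finally I would add the two contributions. Using $H(\HH) = V$, the subextremal term equals $\EE_Z\big[ VC/\zeta - \int_{\HH} \sup_{y\in K} \frac{f(y)}{Z(y)} \, H(\dd f) \big]$, and the integral cancels exactly against $\EE|\Pi_{0,K}^{+}|$, leaving $\EE M = \EE_Z[VC/\zeta] = \EE\big(|K\oplus b(0,r)| \cdot C / \inf_{y\in K} Z(y)\big)$, as claimed. The step I expect to be the main obstacle is the bookkeeping in the reformulation of $M$: verifying rigorously that the stopping index coincides with the threshold count, in particular that at $m^{*}$ one indeed has $Z^{(m^{*})} = Z$ on all of $K$ (so the infimum in the stopping rule may be replaced by $\zeta$), and correctly accounting for shapes in the region whose support misses $K$ — these enter only as subextremal points with $\sup_{y\in K} f(y) = 0$ and are precisely what supplies the full mass $V$ in the final expectation.
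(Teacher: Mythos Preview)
Your proposal is correct and follows exactly the route the paper intends: the paper does not spell out a proof but says the result ``can be shown analogously to Proposition~\ref{prop:refinement}'', and your argument is precisely that analogy, with the adaptive bound $\esssup_f \sup_y f(y)/g(f)$ replaced by the uniform constant $C$ and the total mass $H(\HH)=V$ entering in the final cancellation. Your explicit verification that $M$ coincides with the threshold count $|\{n: t_n > \zeta/C\}|$ is in fact a helpful elaboration of the step that the paper leaves implicit in equation~\eqref{eq:original-ref}.
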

If the shape functions are not jointly compactly supported, the max-stable 
process $Z$ can be approximated using shape functions which are cut off outside
a compact set $J$, i.e.\ $\tilde F_n(x) = F_n(x) \cdot \mathbf 1_{x \in J}$.
Then, with $\tilde U_k \sim_{i.i.d.} {\rm Unif}(K \oplus \check J)$ where
$\check J = \{-x: \ x \in J\}$, for the process $Z_J(\cdot)$ defined by
$$ Z_J(y) = |K \oplus \check J| \cdot
\max_{n \in \NN} \frac{\tilde F_n(y - \tilde U_n)}{\sum_{k=1}^n \xi_k},
\quad y \in K,$$
the number $M$ of shape functions that need to be considered for the exact
process $\{Z_J(y), \ y \in K\}$ is finite a.s., and, by Proposition 
\ref{prop:mmm-number-exact}, its expectation equals 
$\EE \left( \frac{|K \oplus \check J| \cdot C}{\inf_{y \in K} Z_J(y)}\right)$.

\subsection{Theoretical Comparison in the Case of the Smith Process} \label{subsec:comp-theo}

In order to compare the aforementioned two numbers of spectral functions, we 
consider the Smith process described in Example \ref{ex:mmm} on a rectangle
$[-R,R]^d$ for $d=1,2$. By Example \ref{ex:mmm} and Equations 
\eqref{eq:q-assess}-\eqref{eq:cond-sharp-new2}, the expected number of spectral
functions considered in the normalized spectral representation $\tilde Z$ 
equals
\begin{align*}
 Q_{g^*} ={}& 
 \begin{cases}
 \left(\sqrt\frac{2}{\pi}\frac R \sigma + 1\right) \EE\left(\sup_{y \in [-R,R]} Z(y)^{-1}\right),& d=1\\
 \left(\frac{2}{\pi}\left(\frac R \sigma\right)^2 + 2 \sqrt\frac{2}{\pi}\frac R \sigma + 1\right)
  \EE\left(\sup_{y \in [-R,R]^2} Z(y)^{-1}\right), &d=2.
 \end{cases}
\end{align*}
For the simulation algorithm of \citet{schlather02}, we need an approximation
as described above. Here, a natural choice for cutting off the shape function
is $L = [-k \sigma, k \sigma]^d$ for some $k \in \NN$. Then, the expected
number of considered shape functions equals
$$ \EE M_k = \sqrt{\frac 2\pi}^d \left(\frac{R}{\sigma} + k\right)^d \EE\bigg(\sup_{y \in [-R,R]^d} Z_{[-k\sigma,k\sigma]^d}(y)^{-1}\bigg).$$

Thus, the ratio between the two expected numbers of considered spectral 
functions, $Q_{g^*} / \EE M_k$, can be written as a product
\begin{align} \label{eq:prod}
  \frac{Q_{g^*}}{\EE M_k} = A_{R,k}P_{R,k}
\end{align}
where
\begin{align*}
 A_{R,k} = \begin{cases}
            \frac{R + \sqrt{\pi/2} \sigma}{R + k \sigma}, & d=1,\\ 
            \frac{R^2 + \sqrt{2\pi} \sigma R + \frac \pi 2 \sigma^2}{R^2 + 2k\sigma R + k^2 \sigma^2}, & d=2
           \end{cases} 
\end{align*}
and
$$ P_{R,k} = \frac{\EE\left(\sup_{y \in [-R,R]^d} \tilde Z(y)^{-1}\right)}
     {\EE\left(\sup_{y \in [-R,R]^d} Z_{[-k\sigma,k\sigma]^d}(y)^{-1}\right)}.$$
As we have $Z_{[-k\sigma,k\sigma]^d} \rightarrow_d \tilde Z$ as $k \to \infty$, the
relative number of considered spectral functions asymptotically equals
$Q_{g^*} / \EE M_k = A_{R,k} (1 + o(1))$ as $k \to \infty$.
Note that $A_{R,k} < 1$ if and only if $k > \sqrt{\frac \pi 2}$.
In addition, as $Z_{[-k\sigma,k\sigma]^d}$ is constructed via the cut off shape
functions $\tilde F_n(\cdot) \leq F_n(\cdot)$, we have that $P_{R,k} \leq 1$.
Thus, in the product \eqref{eq:prod}, the first factor $A_{R,k}$ basically 
refers to the area to which the points of the Poisson point process belong, and
the second factor $P_{R,k}$ refers to the exactness of the approximation by
Schlather's \citeyearpar{schlather02} algorithm.

\subsection{Simulation Study for the Smith Process} \label{subsec:comp-simu}

We will now verify the theoretical considerations above in a simulation study.
To this end, for $\sigma=1$, we simulate $Z$ and $Z_{[-k,k]^d}$ for $k=2,3$
on a grid $K=\{-R, -R+h, \ldots, R-h, R\}^d$, $d=1,2$. The density function
$g^*$, however, is chosen as if $K$ was the rectangle $[-R,R]^d$.

In the case $d=1$, for $h=0.1$ and $R \in \{1,2,5,10,50,100\}$ we simulate each
process $N=5000$ times. The values of $Q_{g^*}$ and $\EE M_k$ are 
estimated via the corresponding empirical means denoted by 
$\hat Q_{g^*}$ and $\widehat{\EE M_k}$. For estimation of $P_{R,k}$ we use the
plug-in estimator $\hat P_{R,k}$ based on the empirical means of 
$\sup_{y \in K} \tilde Z(y)^{-1}$ and $\sup_{y \in K} Z_{[-k,k]^d}(y)^{-1}$.
The results of the simulation study are shown in Table \ref{tab:1d}.

\begin{table}
% table caption is above the table
\caption{Results for simulations of $\tilde Z$ and $Z_{[-k,k]}$, $k=2,3$, on
$\{-R, -R+0.1, \ldots, R-0.1, R\}$ for different $R$. For each case, $A_{R,k}$
and the estimates for $Q_{g^*}$, $\EE M_k$ and $P_{R,k}$ as defined in 
Subsection \ref{subsec:comp-theo} are displayed, based on $N=5000$ simulations
of each process.}
\label{tab:1d}       % Give a unique label
% For LaTeX tables use
\begin{tabular}{rrcrrrrcrrrr}
\hline\noalign{\smallskip}
$R$ & $\hat Q_{g^*}$ 
& \ & $\widehat{\EE M_2}$ & $\frac{\hat \Qorig_{g^*}}{\widehat{\EE M_2}}$ & $A_{R,2}$ & $\hat P_{R,2}$ 
& \ & $\widehat{\EE M_3}$ & $\frac{\hat \Qorig_{g^**}}{\widehat{\EE M_3}}$ & $A_{R,3}$ & $\hat P_{R,3}$ \\
\noalign{\smallskip}\hline\noalign{\smallskip}
  1 &   3.12 &&   4.38 & 0.71 & 0.75 & 0.94 &&   5.46 & 0.57 & 0.56 & 1.00 \\
  2 &   5.73 &&   7.57 & 0.76 & 0.81 & 0.94 &&   8.93 & 0.64 & 0.65 & 0.98 \\
  5 &  15.82 &&  18.82 & 0.84 & 0.89 & 0.95 &&  19.98 & 0.79 & 0.78 & 1.02 \\
 10 &  35.63 &&  40.57 & 0.88 & 0.94 & 0.94 &&  41.16 & 0.87 & 0.87 & 1.00 \\
 50 & 239.75 && 257.61 & 0.93 & 0.99 & 0.94 && 247.35 & 0.97 & 0.97 & 1.00 \\
100 & 540.44 && 579.11 & 0.93 & 0.99 & 0.94 && 550.70 & 0.98 & 0.98 & 1.00 \\
\noalign{\smallskip}\hline
\end{tabular}
\end{table}

First, we note that -- in accordance to Equation \eqref{eq:prod} --
$Q_{g^*}$ is always smaller than $\EE M_k$. For instance, for $R=1$,
the number of considered shape functions is decreased by $29\%$ ($k=2$) and 
$43\%$ ($k=3$), respectively. Furthermore, we observe that $P_{R,k}$ seems to
be almost constant in $R$, namely $P_{R,2} \approx 0.95$ and 
$P_{R,3} \approx 1$ which shows that the approximation of $\tilde Z$ by $Z_{[-3,3]}$
is largely good for $h=0.1$. Thus, the behavior of $Q_{g^*} / \EE M_k$ is 
basically driven by $A_{R,k}$ which tends to $1$ as $R \to \infty$. For large
$R$, $Q_{g^*} / \EE M_k \approx P_{R,k}$. Thus, we get the surprising
fact that  $\EE M_2 > \EE M_3$ even though the approximation of $\tilde Z$ by
$Z_{[-2,2]}$ is less accurate than by $Z_{[-3,3]}$.

\begin{table}
% table caption is above the table
\caption{Results for simulations of $\tilde Z$ and $Z_{[-k,k]^2}$, $k=2,3$, on
$\{-R, -R+0.25, \ldots, R-0.25, R\}^2$ for different $R$. For each case, 
$A_{R,k}$ and the estimates for $Q_{g^*}$, $\EE M_k$ and $P_{R,k}$ as defined
in Subsection \ref{subsec:comp-theo} are displayed, based on $N=2500$ simulations
of each process.}
\label{tab:2d}       % Give a unique label
% For LaTeX tables use
\begin{tabular}{rrcrrrrcrrrr}
\hline\noalign{\smallskip}
$R$ & $\hat Q_{g^*}$ 
& \ & $\hat M_2$ & $\frac{\hat \Qorig_{g^*}}{\widehat{\EE M_2}}$ & $A_{R,2}$ & $\hat P_{R,2}$ 
& \ & $\hat M_3$ & $\frac{\hat \Qorig_{g^*}}{\widehat{\EE M_3}}$ & $A_{R,3}$ & $\hat P_{R,3}$ \\
\noalign{\smallskip}\hline\noalign{\smallskip}
  1 &   8.14 &&  14.86 & 0.55 & 0.56 & 0.96 &&  26.37 & 0.31 & 0.32 & 0.96 \\
  2 &  26.32 &&  40.17 & 0.66 & 0.66 & 1.00 &&  61.07 & 0.43 & 0.42 & 1.03 \\
  5 & 150.89 && 189.83 & 0.79 & 0.80 & 0.99 && 247.10 & 0.61 & 0.61 & 1.00 \\
 10 & 636.03 && 727.33 & 0.87 & 0.88 & 0.99 && 839.55 & 0.76 & 0.75 & 1.01 \\
\noalign{\smallskip}\hline
\end{tabular}
\end{table}

For $d=2$, $R \in \{1,2,5,10\}$ and $h=0.25$, each process is simulated 
$N=2500$ times. The results are shown in Table \ref{tab:2d}. In general,
the results are similar to our observations for $d=1$. However, for $d=2$ the
improvements compared to Schlather's \citeyearpar{schlather02} algorithm are 
even more distinct. In the case $R=1$, the number of considered spectral 
functions is decreased by $45\%$ ($k=2$) and $69\%$ ($k=3$), respectively.
However, the results of the algorithm by \citet{schlather02} seem to be 
quite accurate even for $k=2$ as $P_{R,k}$ suggests.

\section{Summary and Discussion} \label{sec:discussion}

Whilst in the definition of a max-stable process an infinite number of spectral
functions is involved, the minimal number of spectral functions that are
actually to be considered in a simulation is an open problem.
We consider two substitution problems, problems \eqref{eq:simple} and
\eqref{eq:refined}, and show that the unique normalized spectral representation
is a solution in both cases. Although we feel that problem \eqref{eq:refined}
is rather close to the original problem \eqref{eq:original}, it remains unclear
whether the normalized spectral representation is also the solution to the
original one. It is even not known whether different initial choices of the
spectral representation in \eqref{eq:def} may lead to the same solution via
renormalizations $g$ in \eqref{eq:0} and whether the solution is unique. This
is left for future research. 

Section \ref{sec:simu} reveals two remarkable facts: (i) the potential of the
approach based on the normalized spectral representation to improve the 
algorithm of \cite{schlather02} and (ii) the occasional occurrence of a smaller
number of considered shape functions in a better approximation. Neither a
careful coding that exploits our fundamental results seems to be straightforward
nor are the implications on the real running times foreseeable.
This is also left for future research.

% AOS,AOAS: If there are supplements please fill:
%\begin{supplement}[id=suppA]
%  \sname{Supplement A}
%  \stitle{Title}
%  \slink[doi]{10.1214/00-AOASXXXXSUPP}
%  \sdatatype{.pdf}" 
%  \sdescription{Some text}
%\end{supplement}

\bibliography{spectral.bib}
\bibliographystyle{imsart-nameyear}

\end{document}